\theoremstyle{plain}
\numberwithin{equation}{section}
\newtheorem{theorem}{Theorem}[section]
\theoremstyle{definition}
\def\ZZ{{\mathbb Z}}
\def\RR{{\mathbb R}}
\def\TT{{\mathbb T}}
\def\jump#1{{[\hspace{-2pt}[#1]\hspace{-2pt}]}}
\def\Bigjump#1{{\Big[\hspace{-4.5pt}\Big[#1\Big]\hspace{-4.5pt}\Big]}}
\def \x {\mb{x}}
\def \p{\partial}
\newcommand{\norm}[1]{\left\lVert#1\right\rVert}    
\newcommand\abs[1]{\left|#1\right|}    
\begin{document}
\title{Global existence and decay of the inhomogeneous Muskat problem with Lipschitz initial data}

\author{Diego Alonso-Or\'{a}n}
\address{Institute für Angewandte Mathematik, Universitat Bonn, Endenicher Allee 60, 53115 Bonn, Germany}
\email{alonso@iam.uni-bonn.de}
\author{Rafael Granero-Belinch\'on}
\address{Departamento  de  Matem\'aticas,  Estad\'istica  y  Computaci\'on,  Universidad  de Cantabria.  Avda.  Los  Castros  s/n,  Santander,  Spain.}
\email{rafael.granero@unican.es}

\subjclass[2010]{} 
\keywords{}

\date{\today}

\begin{abstract}
In this work we study the inhomogeneous Muskat problem, \emph{i.e.} the evolution of an internal wave between two different fluids in a porous medium with discontinuous permeability. In particular, under precise conditions on the initial datum and the physical quantities of the problem, our results ensure the decay of the solutions towards the equilibrium state in the Lipschitz norm. In addition, we establish the global existence and decay of Lipschitz solutions.
\end{abstract}
\maketitle
\tableofcontents
\section{Introduction and prior results}
Probably due to the ubiquitous nature of fluids, free boundary problems in fluid dynamics have attracted the attention of many different research groups in the last decades. These problems are physically interesting and mathematically challenging and, moreover, they are very connected to real world applications. 

In this paper we consider one of these problems, the so-called inhomogeneous Muskat problem (see \cite{granero2020growth} for a recent review). This problem studies the motion of two incompressible homogeneous fluids filling an inhomogeneous porous medium where the permeability is a step function. These fluids are assumed to be separated by a sharp interface (see Figure \ref{Figure1} below). Then, the plane is divided in the following three domains  
\begin{subequations} \label{domains}
\begin{align} 
\Omega^+(t)&=\{(x,y)\in \TT\times \RR,\;\; f(x,t)<y<\infty\} \,, \\
\Omega^-_{\text{top}}(t)&=\{(x,y)\in \TT\times \RR,\;\; -h_2<y<f(x,t)\}\,, \\
\Omega^-_{\text{bottom}}&=\{(x,y)\in \TT\times \RR,\;\; -\infty<y<-h_2\}\,,
\end{align} 
\end{subequations}
where $\TT$ denotes the unit circumference, namely, $\TT=[-\pi,\pi]$ with periodic boundary conditions. These domains are separated by the interfaces
\begin{subequations} \label{boundaries}
\begin{align} 
\Gamma(t)&=\{(x,f(x,t)),\;x_1\in\TT\} \,,\\
\Gamma_{\operatorname{perm}}&=\{(x,-h_2),\;x_1\in\TT\} \,,
\end{align} 
\end{subequations}
where the function $f$ and the constant $h_2$ satisfy
\begin{equation}\label{notouching}
\min_{x \in \TT  }f(x,t) >0  \ \text{ and } \ h_2>0.
\end{equation}
We observe that $f(x,t)$ is an unknown that must be determined from the dynamics while $h_2$ is part of the data of the problem. The fixed-in-time interface $\Gamma_{\operatorname{perm}}$ denotes the line accross which the permeability function $\kappa(x)$ has a jump, namely
$$
\kappa(x)=\left\{\begin{array}{ll}\kappa^+ & \text{ in }\Omega^+(t)\cup\Omega^-_{\text{top}}(t)\\
\kappa^- & \text{ in }\Omega^-_{\text{top}}(t)\cup\Omega^-_{\text{bottom}}\,\end{array}\right.,
$$
for given positive constants $\kappa^\pm>0$.  
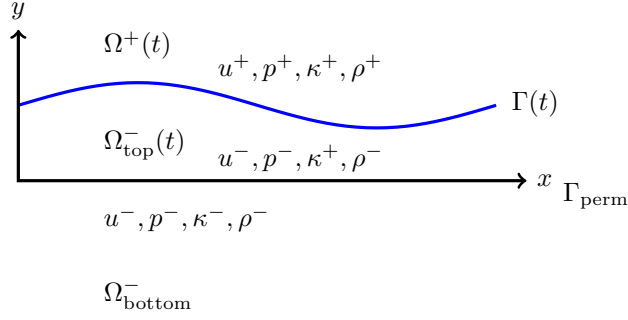
\begin{figure}[h]
	\centering
	\begin{tikzpicture}[domain=0:2*pi, scale=1] 
	\draw[color=black] plot (\x,{0.3*sin(\x r)+1}); 
	\draw[very thick, smooth, variable=\x, blue] plot (\x,{0.3*sin(\x r)+1}); 
	\draw[very thick,<->] (2*pi+0.4,0)  node[right] {$x$} -- (0,0) -- (0,2) node[above] {$y$};
	\coordinate[label=above:{$\Gamma_{\text{perm}}$}] (A) at (7.6,-0.5);
	\coordinate[label=above:{$\Gamma(t)$}] (A) at (6.8,0.72);
	\node[right] at (1,-0.5) {$u^-,p^-,\kappa^-,\rho^-$};
	\node[right] at (1,1.8) {$\Omega^+(t)$};
	\node[right] at (1,0.5) {$\Omega^-_{\text{top}}(t)$};
	\node[right] at (1,-1.5) {$\Omega^-_{\text{bottom}}$};
	\node[right] at (2.5,1.5) {$u^+,p^+,\kappa^+,\rho^+$};
	\node[right] at (2.5,0.3) {$u^-,p^-,\kappa^+,\rho^-$};
	\end{tikzpicture} 
	\caption{Physical setting}
	\label{Figure1}
\end{figure}
With this notation, assuming that the acceleration due to gravity is equal to 1 and denoting by $u,p$ and $\rho$ the velocity, pressure and density of the fluids, the equations describing the phenomenon are
\begin{subequations}\label{muskat}
\begin{alignat}{2}
\frac{u^+}{\kappa^+}+\nabla p^+&=-\rho^+ e_2,  \qquad&&\text{in } \Omega^+(t)\times[0,T]\,,\\
\frac{u^-}{\kappa^+}+\nabla p^-&=-\rho^- e_2,  \qquad&&\text{in } \Omega^-_{\text{top}}(t)\times[0,T]\,,\\
\frac{u^-}{\kappa^-}+\nabla p^-&=-\rho^- e_2,  \qquad&&\text{in } \Omega^-_{\text{bottom}}\times[0,T]\,,\\
\nabla\cdot u^+ &=0,  \qquad&&\text{in } \Omega^+(t)\times[0,T]\,,\\
\nabla\cdot u^- &=0,  \qquad&&\text{in } \Omega^-_{\text{top}}(t)\times[0,T]\,,\\
\nabla\cdot u^- &=0,  \qquad&&\text{in } \Omega^-_{\text{bottom}}\times[0,T]\,,\\
\jump{p} &= 0 &&\text{on }\Gamma(t)\times[0,T],\\
\jump{p} &= 0 &&\text{on }\Gamma_{\operatorname{perm}}\times[0,T],\\
\jump{\nabla p\cdot e_2}  &= -\Bigjump{\frac{1}{\kappa}}u^-\cdot e_2 \qquad &&\text{on }\Gamma_{\operatorname{perm}}\times[0,T],
\end{alignat}
\end{subequations}
where $n$ is the upward pointing unit normal to $\Gamma(t)$ and 
 $\jump{w} = w^+ - w^-$ denotes the jump of a discontinuous function $w$ across $\Gamma(t)$ or $\Gamma_{\operatorname{perm}}$. 

As it is well-known, this system can be written equivalently using the contour equation formulation \cite{berselli2014local}. In this case, the inhomogeneous Muskat problem \eqref{muskat} is equivalent to the following nonlinear and nonlocal PDE for the free boundary $f(x,t)$: 
\begin{multline}\label{eq:I}
\p_{t} f(x)=\frac{\kappa^+(\rho^--\rho^+)}{4\pi} \text{P.V.}\int_\TT\frac{\sin(\beta)(\p_{x} f(x)-\p_{x} f(x-\beta))}{\cosh(f(x)-f(x-\beta))-\cos(\beta)} d\beta\\
+\frac{1}{4\pi} \text{P.V.}\int_\TT\frac{(\p_{x} f(x)\sinh(f(x)+h_2)+\sin(x-\beta))\varpi_2(\beta)}{\cosh(f(x)+h_2)-\cos(x-\beta)}  d\beta,
\end{multline}
where the second vorticity amplitude $\varpi_2(x)$ can be written as
\begin{equation}
\varpi_2(\beta)=\mathcal{A}\text{P.V.}\int_\TT\frac{\sinh(h_2+f(\gamma))\p_{x}f(\gamma)}{\cosh(h_2+f(\gamma))-\cos(\beta-\gamma)}  d\gamma,\label{eq:II}
\end{equation}
and we have defined the parameter 
$$
\mathcal{A}:=\frac{\kappa^+(\rho^--\rho^+)}{2\pi}\frac{\kappa^+-\kappa^-}{\kappa^++\kappa^-}.
$$
This contour equation can be written in divergence form as
\begin{multline}\label{eq:divergence}
\p_{t} f(x)=\frac{\kappa^+(\rho^--\rho^+)}{\pi} \p_{x}\text{P.V.}\int_\TT \arctan\left( \frac{\tanh((f(x)-f(x-\beta))/2)}{\tan(\beta/2)}\right) \ d\beta\\
+\frac{1}{4\pi} \p_{x}\text{P.V.}\int_\TT\log(\cosh(f(x)+h_2)-\cos(x-\beta))\varpi_2(\beta)d\beta,
\end{multline}
where, furthermore, $\varpi_2$ is a zero order operator in $f$. Indeed, we can compute
\begin{align*}
\varpi_2(\beta)&=\mathcal{A} \text{P.V.}\int_\TT\frac{\sinh(h_2+f(\gamma))\p_{x}f(\gamma)d\gamma}{\cosh(h_2+f(\gamma))-\cos(\beta-\gamma)}\pm\frac{\sin (\beta-\gamma)}{\cosh(h_{2}+f(\gamma))-\cos(\beta-\gamma)}\\
&=\mathcal{A} \text{P.V.}\int_\TT\partial_\gamma \log(\cosh(h_2+f(\gamma))-\cos(\beta-\gamma))+\mathcal{A} \text{P.V.}\int_\TT\frac{\sin (\beta-\gamma)}{\cosh(h_{2}+f(\gamma))-\cos(\beta-\gamma)}\\
&=\mathcal{A} \text{P.V.}\int_\TT\frac{\sin (\beta-\gamma)}{\cosh(h_{2}+f(\gamma))-\cos(\beta-\gamma)}.
\end{align*}
As a consequence, it admits a straightforward notion of weak solution. Namely, $f$ is a weak solution of \eqref{eq:divergence} (and consequently of \eqref{muskat}) if and only if for all $\phi(x,t)\in C_c^\infty([0,T)\times \mathbb{T})$ the following equality holds
\begin{multline}\label{eq:weak}
-\int_\mathbb{T}\phi(x) f_0(x)dx-\int_0^T\int_\mathbb{T}\p_{t}\phi f(x)dxds\\
+\frac{\kappa^+(\rho^--\rho^+)}{\pi} \int_0^T\int_\mathbb{T}\p_{x}\phi(x)\text{P.V.}\int_\TT  \arctan\left( \frac{\tanh((f(x)-f(x-\beta))/2)}{\tan(\beta/2)}\right) \ d\beta dx ds\\
+\frac{1}{4\pi} \int_0^T\int_\mathbb{T}\partial_x \phi(x) \text{P.V.}\int_\TT  \log(\cosh(f(x)+h_2)-\cos(x-\beta))\varpi_2(\beta)d\beta dx ds=0.
\end{multline}

The Muskat problem is a popular research topic and the literature is huge. For an overview of the different results available we refer to \cite{ambrose2004well,ambrose2014zero,cheng2016well,cordoba2011interface,
cordoba2013porous,cordoba2007contour,escher2011generalized,escher2011parabolicity,
escher2015domain,granero2020growth,matioc2019well,matioc2018viscous,pruess2016muskat,siegel2004global} and the references therein.

The Muskat problem is scale invariant under the transformation
$$
f_\lambda(x,t)=\frac{1}{\lambda}f(\lambda x,\lambda t).
$$
This scaling serves as a zoom in towards the small scales and implies that the $L^2-$based Sobolev spaces $H^r$ for $r>3/2$ are subcritical. In this range of spaces a local solution is known to exists for arbitrary initial data (see for instance \cite{alazard2020paralinearization,ambrose2004well,cordoba2011interface,cordoba2013porous,cordoba2007contour,matioc2019well,nguyen2020paradifferential,pernas2017local}). 

As a byproduct of the previous scaling, we can define a number of critical spaces. Some of them are the Sobolev space $H^{3/2}$, the Lipschitz class $W^{1,\infty}$, the space of $C^1$ functions or the Wiener space 
$$
A^1=\{f\text{ s.t. }i\xi \hat{f}(\xi)\in L^1\},
$$
among others. To develop new ideas and tools to handle the very difficult case of initial data in critical spaces is a very hot research area in PDEs. In this regard, we refer to the works by C\'ordoba \& Gancedo \cite{cordoba2007contour} and Constantin, C\'ordoba, Gancedo \& Strain\cite{constantin2012global}, Constantin, C\'ordoba, Gancedo, Rodr\'iguez-Piazza \& Strain \cite{constantin2016muskat}, Gancedo, Garc\'ia-Ju\'arez, Patel \& Strain\cite{gancedo2019muskat}, Patel \& Strain \cite{patel2017large} and Gancedo, Granero \& Scrobogna \cite{gancedo2020surface} for the proof of the global existence of strong solution for small initial data in the Wiener space $A^1$.

Similarly, we refer to the works by C\'ordoba \& Lazar \cite{cordoba2018global} (see also \cite{granero2020growth}), Gancedo \& Lazar \cite{gancedo2020global} and Alazard \& Nguyen \cite{alazard2020endpoint,alazard2021cauchy,alazard2021cauchy2,alazard2021quasilinearization} for results regarding the space $H^{3/2}$. In these papers the global existence for small initial data in the critical space $H^{3/2}$ were proved furthermore, the local existence for arbitrary initial data in the critical space has been also established. Let us emphasize that in some of the previous results the initial data is also assumed to be Lipschitz.

Remarkably, certain solutions to the Muskat problem experience turning singularities \cite{castro2013breakdown,castro2012rayleigh,cordoba2015note,cordoba2017note,gomez2014turning}, \emph{i.e.} a finite time blow up of the derivative of the interface $f$, and splash singularities \cite{castro2016splash}, \emph{i.e.} a self intersection of the interface. Then, the Lipschitz norm seems of particular importance for the Muskat problem. In this regards, we refer to the pioneering work of C\'ordoba \& Gancedo \cite{cordoba2009maximum} (see also \cite{constantin2016muskat,constantin2012global,granero2014global}) where a maximum principle for the $W^{1,\infty}$ was established. Similarly, we refer to the work Constantin, Gancedo, Shvydkoy \& Vicol \cite{constantin2017global} for a proof of the global existence of solution corresponding to small initial data in the Lipschitz class. Similar results have been obtained by Cameron \cite{cameron2018global,cameron2020eventual,cameron2020global}. In the case of $C^1$ initial data the interested reader is referred to the work by Chen, Nguyen, \& Xu  \cite{chen2021muskat}.
 
The cases of porous media with discontinuous permeability or impervious boundaries have considered in a number of previous works. In particular, the known results covering the local existence and finite time singularities are contained in the papers \cite{berselli2014local,cordoba2014confined,gomez2014turning,pernas2017local}. However, to the best of our knowledge, the only global results were obtained in \cite{granero2014global,granero2019well,patel2021global}. Remarkably, in the case of a discontinuous permeability the only result in critical spaces is the paper by Patel \& Shankar \cite{patel2021global} where the authors consider initial data in Wiener space $A^1$. 

Numerical simulations for the inhomogeneous Muskat problem were carried over in the paper \cite{berselli2014local}. Those simulations suggest that the amplitude of the internal wave decays towards the equilibrium at least for most of the initial data. However, a rigorous proof or a counterexample of the possible decay towards equilibrium in Lipschitz norm remain as an open question in the case of a porous medium with two different permeabilities. Furthermore, the global in time existence for small Lipschitz initial data was also another open problem so far. The purpose of this paper is to clarify the previously mentioned questions. Specifically, in this work we establish conditions on the initial data and the physical parameters of the problem that ensure the decay of the solution in $W^{1,\infty}$ towards the equilibrium state. In addition, we also prove the global existence of weak solutions and their decay towards the flat equilibrium.

Besides the fact that the problem is \emph{more nonlocal} due to the permeability jump, the inhomogeneous Muskat problem is more challenging also due to the fact that there is a new scenario of possible pathological behaviour. Indeed, when the internal wave separating both fluids reaches the curve where the permeability changes its value, the contour equation becomes more defiant in the sense that the term with the double integral becomes singular. In particular, our results rule out the previously mentioned scenario (see the figure below) where the internal wave reaches the curve where the permeability changes and, at the same time, discard the possibility of turning and splash singularities. 

\begin{figure}[h]
	\centering
	\begin{tikzpicture}[domain=0:2*pi, scale=1] 
    \draw[very thick, smooth, variable=\x, red] plot (\x,{0.3*sin(\x r)+0.3}); 
	\draw[very thick, smooth, variable=\x, blue] plot (\x,{0.3*cos(\x r)+1}); 
	\draw[very thick,<->] (2*pi+0.4,0) -- (0,0) -- (0,2);
	\coordinate[label=above:{$\Gamma_{\text{perm}}$}] (A) at (7.6,-0.5);
	\coordinate[label=above:{$\Gamma(t_0)$}] (A) at (6.8,0.85);
	\coordinate[label=above:{$\Gamma(t_1)$}] (A) at (6.8,0.17);
	\end{tikzpicture} 
	\caption{Internal wave touching the curve where the permeability changes.}
\end{figure}
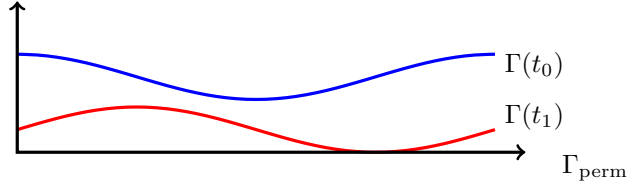

\subsection{Main results}
The purpose of this paper is to study the global existence of weak solutions and the trend to equilibrium of the inhomogeneous Muskat problem in the Lipschitz class. In particular, our first result shows the decay of the $L^\infty$ norm and reads as follows:
\begin{theorem}\label{teo1}
Let $0< f\in C([0,T],H^{3}(\TT))$ be a smooth solution to \eqref{eq:I}, assume that the system is in the Rayleigh-Taylor stable case $\rho^--\rho^+>0$ and that $h_2>0$ and define
$$
\langle f(x,0)\rangle=\frac{1}{2\pi}\int_\TT f(x,0)dx.
$$
Then if 
\begin{multline}\label{condition:smallness}
 \frac{1}{\cosh((\displaystyle\max_x f(x,0)-(\displaystyle\min_x f(x,0))/2)}\\
    \\-\frac{8\mathcal{|\frac{\kappa^+-\kappa^-}{\kappa^++\kappa^-}|}}{\pi^2 (\cosh(h_2+\langle f(x,0)\rangle)-1)^{3}} \ \sinh(\langle f(x,0)\rangle+\max_x f(x,0)-\min_x f(x,0)+h_2)>0,
\end{multline}
we have that
\begin{equation}\label{osc:decay}
(\max_x f(x,t)-\min_x f(x,t))\leq (\max_x f(x,0)-\min_x f(x,0))e^{-\mu t}
\end{equation}
for a sufficiently small constant $0<\mu$ depending only on the physical parameters of the problem and the initial data. In particular, the solution remains non-negative and we have the following bound
$$
\|f(t)\|_{L^\infty}\leq \langle f(x,0)\rangle+(\max_x f(x,0)-\min_x f(x,0))e^{-\mu t}.
$$
\end{theorem}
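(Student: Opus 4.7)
The plan is to track the evolution of $M(t):=\max_x f(\cdot,t)$ and $m(t):=\min_x f(\cdot,t)$ pointwise at extremal points, obtain an upper bound on the time derivative of the oscillation $\omega(t):=M(t)-m(t)$ of the form $\dot\omega \le -\mu\omega$, and then read off exponential decay. First, by Rademacher's theorem $M,m$ are Lipschitz and $\frac{d}{dt}M(t)=\partial_t f(x_M(t),t)$ at a.e.\ $t$ for any maximizing point $x_M(t)$ (similarly for $m$), with $\partial_x f(x_M)=0$, $\Delta_\beta f(x_M)\ge 0$ for all $\beta$, and the symmetric signs at $x_m$. A crucial preliminary observation is that the divergence form \eqref{eq:divergence} immediately gives conservation of the mean, $\langle f(t)\rangle=\langle f(x,0)\rangle$; this will be needed both to close the estimates with $\langle f(x,0)\rangle$ in the constants and to recover the final $L^\infty$ bound.

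Next, I would split $\partial_t f(x_M)=I_1(x_M)+I_2(x_M)$ according to the two lines of \eqref{eq:I} and treat them separately. For the homogeneous piece $I_1$, plugging in $\partial_x f(x_M)=0$ leaves
\[
I_1(x_M)= -\frac{\kappa^+(\rho^--\rho^+)}{4\pi}\,\mathrm{P.V.}\int_\TT \frac{\sin\beta\,\partial_x f(x_M-\beta)}{\cosh(\Delta_\beta f)-\cos\beta}\,d\beta,
\]
and I would follow the C\'ordoba--Gancedo-type maximum principle strategy (as in \cite{cordoba2009maximum,granero2014global}): use $\partial_x f(x_M-\beta)=\partial_\beta \Delta_\beta f$, the non-negativity $\Delta_\beta f\ge 0$ at the max, and a symmetrization $\beta\mapsto -\beta$ to recast $I_1(x_M)$ in manifestly dissipative form, yielding a bound
\[
I_1(x_M)\le -\frac{C_1}{\cosh(\omega/2)}\,\omega,\qquad -I_1(x_m)\le -\frac{C_1}{\cosh(\omega/2)}\,\omega,
\]
with $C_1$ depending only on $\kappa^+$ and $\rho^\pm$. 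The cosh factor in the denominator encodes the worst case: when $\Delta_\beta f$ reaches its maximum amplitude $\omega$, the kernel is smallest, and the dissipation is weakest exactly at that scale.

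For the permeability piece $I_2$, I would rely on the simplified closed form for $\varpi_2$ derived earlier in the paper, namely $\varpi_2(\beta)=\mathcal{A}\,\mathrm{P.V.}\!\int_\TT\frac{\sin(\beta-\gamma)}{\cosh(h_2+f(\gamma))-\cos(\beta-\gamma)}d\gamma$. Using the lower bound $\cosh(h_2+f(\gamma))-\cos(\beta-\gamma)\ge \cosh(h_2+\langle f\rangle)-1$ (after showing $m(t)\ge 0$ via the eventual bootstrap) and extracting a factor of $\omega$ through a mean-value argument, $\|\varpi_2\|_\infty$ is controlled by $|\mathcal{A}|/(\cosh(h_2+\langle f\rangle)-1)$. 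Plugging into the second term of \eqref{eq:I}, using $\partial_x f(x_M)=0$ and the fact that $\int_\TT \frac{\sin(x_M-\beta)}{\cosh(f(x_M)+h_2)-\cos(x_M-\beta)}d\beta=0$ by oddness, one can insert $\varpi_2(\beta)-\varpi_2(x_M)$ to gain an extra derivative of $\varpi_2$ (and hence an extra factor of $1/(\cosh(h_2+\langle f\rangle)-1)$ together with the $\sinh(\langle f\rangle+\omega+h_2)$ envelope from differentiating the kernel). This gives
\[
|I_2(x_M)|+|I_2(x_m)|\le \frac{8\,|(\kappa^+-\kappa^-)/(\kappa^++\kappa^-)|}{\pi^2(\cosh(h_2+\langle f\rangle)-1)^3}\,\sinh(\langle f\rangle+\omega+h_2)\,\omega,
\]
matching the combination appearing in \eqref{condition:smallness}.

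Combining the three paragraphs, $\dot\omega(t)\le -\mu(\omega(t))\,\omega(t)$ where $\mu(\omega)$ is, up to a uniform positive factor, the left-hand side of \eqref{condition:smallness} with the initial oscillation replaced by $\omega$. The right-hand side is monotone decreasing in $\omega$, so the smallness at $t=0$ is preserved: $\omega$ cannot increase, hence $\mu(\omega(t))\ge \mu(\omega(0))>0$, giving \eqref{osc:decay}. A short bootstrap (using $m(t)\ge \langle f\rangle - \omega(t)$ and that the positivity of $\cosh(h_2+\langle f\rangle)-1$ persists) ensures $m(t)>0$ so the estimates close. Finally, $\|f(t)\|_{L^\infty}=M(t)\le \langle f(0)\rangle + \omega(t)$ completes the proof. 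I expect the main obstacles to be (i) obtaining the sharp dissipative lower bound on $I_1$ with the exact factor $1/\cosh(\omega/2)$ rather than a weaker substitute, and (ii) getting three cosh factors in the denominator of the $I_2$ estimate by carefully using oddness of the kernel and the cancellation $\int \sin/(\cosh-\cos)=0$ before applying the pointwise bounds.
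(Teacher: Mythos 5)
Your proposal follows the same overall strategy as the paper: normalize the mean, track $M(t)=\max f$ and $m(t)=\min f$ via Rademacher's theorem, split $\partial_t f$ at the extremal points into a homogeneous piece and a permeability piece, derive a dissipative bound with the $1/\cosh(\omega/2)$ factor for the first and an $O(\omega)$ bound with three $(\cosh-1)$ factors for the second, close the ODE for the oscillation, and bootstrap the smallness condition. The mean-conservation observation from the divergence form, the Rademacher argument, the propagation of the smallness condition, and the final $L^\infty$ bound are all exactly as in the paper. There are two technical variations worth noting. For the homogeneous piece you invoke the C\'ordoba--Gancedo symmetrization $\beta\mapsto-\beta$; the paper instead works with the expanded divergence form of \eqref{eq:divergence} and uses the elementary inequalities $\tan^2+\tanh^2\le 1+\tan^2$ and $\sinh u\ge u$ together with mean zero, which avoids symmetrization altogether. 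Both lead to the same $-C\omega/\cosh(\omega/2)$ bound and either is fine.

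Your treatment of the permeability piece, however, is not internally consistent as written. The paper performs a single cancellation: after changing variables $x-\beta=w$, $\beta-\gamma=z$, it inserts the odd-integral identity $\text{P.V.}\!\int_\TT\frac{\sin z}{\cosh(f(x-w)+h_2)-\cos z}\,dz=0$ to manufacture the difference $f(x{-}w{-}z)-f(x{-}w)$, which produces the factor of $\omega$, a $\sinh$ envelope, and two extra $(\cosh-1)$ factors; combined with the one $(\cosh-1)$ factor from the outer kernel this gives exactly three. Your proposal describes two separate cancellations: first bound $\|\varpi_2\|_\infty$ by ``extracting a factor of $\omega$'' yet the bound you write, $|\mathcal{A}|/(\cosh(h_2+\langle f\rangle)-1)$, carries neither the $\omega$ factor nor the second $(\cosh-1)$ factor; then you additionally insert $\varpi_2(\beta)-\varpi_2(x_M)$ via oddness of the outer kernel. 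Either cancellation alone suffices, but not as you have accounted them: the correct single-step bound is $\|\varpi_2\|_\infty\lesssim |\mathcal{A}|\,\omega\,\sinh(\langle f\rangle+\omega+h_2)/(\cosh(h_2+\langle f\rangle)-1)^2$ (because $\varpi_2\equiv 0$ when $f$ is constant, so subtract the constant-$f$ kernel and apply the mean value theorem), and together with the outer kernel this already gives the three $(\cosh-1)$ factors in \eqref{condition:smallness}. Tacking on the second cancellation $\varpi_2(\beta)-\varpi_2(x_M)$ is redundant—done naively it would overshoot, producing a fourth $(\cosh-1)$ factor that doesn't match the stated hypothesis. Clean up this accounting and the proof goes through exactly as in the paper.
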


Our second result deals with the $\dot{W}^{1,\infty}$ maximum principle:
\begin{theorem}\label{teo2}
Let $0< f\in C([0,T],H^{3}(\TT))$ be a smooth solution to \eqref{eq:I}, assume that the system is in the Rayleigh-Taylor stable case $\rho^--\rho^+>0$ and that $h_2>0$. Let us furthermore assume that the initial data satisfies the hypothesis of Theorem \ref{teo1} and, in addition, that
\begin{align}
0>&-  \frac{\kappa^+(\rho^--\rho^+)}{4}\|\partial_x f(0)\|_{L^\infty} \left(1-\mathcal{P}(\|\partial_x f(0)\|_{L^\infty})\right)+ \pi\abs{\mathcal{A}} \frac{\cosh(\norm{f(0)}_{L^{\infty}}+h_{2})}{(\cosh(h_{2})-1)^2}\|\partial_x f(0)\|_{L^\infty}^2 \nonumber\\
&\quad + 2\pi \abs{\mathcal{A}}\frac{\sinh^{2}(\norm{f(0)}_{L^{\infty}}+h_2)}{(\cosh(h_{2})-1)^{3}}\|\partial_x f(0)\|_{L^\infty}^2+2\pi\abs{\mathcal{A}}\frac{\sinh(\norm{f(0)}_{L^{\infty}}+h_{2})}{(\cosh(h_{2})-1)^3}\|\partial_x f(0)\|_{L^\infty}, \label{eq:hipnorminf}
\end{align}
with
$$
\mathcal{P}(z)=\left[\frac{1}{\left(1-\left(\sinh(z(\pi/2))\right)^{2}\right)^2}-1\right]+\frac{1}{2}z(\cosh(\pi z) 2\pi+\pi \sinh(\pi z)).
$$
Then, the solution satifies
$$
\|\partial_x f(t)\|_{L^\infty}\leq \|\partial_x f(0)\|e^{-\mu t},
$$
for a sufficiently small constant $\mu$ depending on the parameters of the problem and the initial data.
\end{theorem}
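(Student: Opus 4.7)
The plan is to derive a differential inequality for $M_1(t):=\|\partial_x f(t,\cdot)\|_{L^\infty}$ by a Rademacher-type argument at a moving maximum point and then to conclude exponential decay via a continuation scheme. At almost every time there exists $x_t\in\TT$ with $|\partial_x f(x_t,t)|=M_1(t)$; by symmetry (the case of a negative minimum is analogous) we may assume $\partial_x f(x_t,t)=M_1(t)>0$, so that $\partial_{xx}f(x_t,t)=0$ and $\tfrac{d}{dt}M_1(t)\leq \partial_t\partial_x f(x_t,t)$. I would then differentiate \eqref{eq:I} in $x$, evaluate at $x_t$, and split the resulting expression into the contribution coming from the first integral (the Rayleigh--Taylor part with prefactor $\kappa^+(\rho^--\rho^+)$) and the contribution coming from the second integral (the permeability-jump part involving $\varpi_2$, $h_2$ and $\mathcal{A}$).

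For the first contribution the strategy is the now-classical C\'ordoba--Gancedo pointwise sign argument: at the maximum the factor $\partial_x f(x_t)-\partial_x f(x_t-\beta)\geq 0$ is paired with a strictly positive kernel, which generates a dissipative main term. Using $\partial_{xx}f(x_t,t)=0$ to discard the correction in which $\partial_x$ falls on the symmetric part of the kernel, and then linearizing $\cosh(f(x_t)-f(x_t-\beta))-\cos\beta$ about $1-\cos\beta$ while exploiting $|f(x_t)-f(x_t-\beta)|\leq M_1|\beta|$, one extracts a bound of the form
$$
-\frac{\kappa^+(\rho^--\rho^+)}{4}\,M_1\,\bigl(1-\mathcal{P}(M_1)\bigr),
$$
whose two additive error pieces inside $\mathcal{P}$ come respectively from the multiplicative denominator-error $1/(1-\sinh^2(M_1\pi/2))^2-1$ and from the contributions in which $\partial_x$ lands on the $\sin\beta$-weight and on the quadratic Taylor remainder of the numerator.

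For the second contribution the decisive structural observation is that $h_2>0$ and the lower bound $f\geq 0$, together with the uniform $L^\infty$ control of $f$ supplied by Theorem \ref{teo1}, make the denominator $\cosh(f(x)+h_2)-\cos(x-\beta)$ genuinely regular and bounded below by $\cosh(h_2)-1>0$; this part of the equation can therefore only contribute bounded perturbations and no dissipation. First I would estimate $\varpi_2$ in $L^\infty$ from the reduced identity
$$
\varpi_2(\beta)=\mathcal{A}\,\mathrm{P.V.}\int_\TT\frac{\sin(\beta-\gamma)}{\cosh(h_2+f(\gamma))-\cos(\beta-\gamma)}\,d\gamma,
$$
obtaining, after an integration by parts in $\gamma$ that exposes a $\partial_\gamma f$ factor, a bound of type $\|\varpi_2\|_{L^\infty}\lesssim |\mathcal{A}|\,\sinh(\|f\|_{L^\infty}+h_2)\,(\cosh(h_2)-1)^{-2}\,\|\partial_x f\|_{L^\infty}$. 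Distributing $\partial_x$ across the three factors of the integrand of the second integral in \eqref{eq:I}, and using the uniform lower/upper bounds on its kernel, one then produces contributions of sizes $C(\|f\|_{L^\infty},h_2)\|\partial_x f\|_{L^\infty}^2$ and $C(\|f\|_{L^\infty},h_2)\|\partial_x f\|_{L^\infty}$ that match precisely the three positive terms on the right-hand side of \eqref{eq:hipnorminf}.

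Combining the two estimates yields a pointwise differential inequality $\tfrac{d}{dt}M_1(t)\leq \mathcal{F}(M_1(t),\|f(t)\|_{L^\infty})$, where $\mathcal{F}$ is the right-hand side of \eqref{eq:hipnorminf} with $M_1(t)$ and $\|f(t)\|_{L^\infty}$ in place of their initial values. Hypothesis \eqref{eq:hipnorminf} is precisely $\mathcal{F}(M_1(0),\|f(0)\|_{L^\infty})<0$, so $M_1$ strictly decreases initially. A continuation/bootstrap argument then closes the proof: $\mathcal{F}$ is monotone non-decreasing in each of its two arguments once the sign structure is fixed, and by Theorem \ref{teo1} $\|f(t)\|_{L^\infty}$ stays bounded by its initial value (in fact decays to the mean), so the strict negativity of $\mathcal{F}(M_1(t),\|f(t)\|_{L^\infty})$ is preserved with a uniform rate $\mu>0$ and Gronwall delivers $M_1(t)\leq M_1(0)e^{-\mu t}$. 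The hardest step is the dissipation extraction: one must track the delicate interplay between the cancellation $\partial_{xx}f(x_t,t)=0$ and the nonlinear linearization error so as to produce \emph{precisely} the function $\mathcal{P}$ quoted in the theorem, and the treatment of $\varpi_2$ must be arranged so as not to reintroduce uncontrolled second derivatives of $f$.
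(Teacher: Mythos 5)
The overall strategy you propose is the same as the paper's: a Rademacher-type pointwise argument at the moving extremum of $\partial_x f$, the observation that $\partial_{xx}f$ vanishes there, extraction of dissipation from the Rayleigh--Taylor integral via the geometric series/linearization that produces $\mathcal{P}$, and treatment of the permeability-jump term as a lower-order perturbation thanks to the denominator bound $\cosh(h_2)-1>0$. The continuation/bootstrap step you describe is also what closes the argument. The one place where your accounting does not reproduce the paper's is the treatment of $\varpi_2$. You propose a single $L^\infty$ bound $\|\varpi_2\|_{L^\infty}\lesssim |\mathcal{A}|\sinh(\|f\|_{L^\infty}+h_2)(\cosh(h_2)-1)^{-1}\|\partial_x f\|_{L^\infty}$ (with one power of $\cosh(h_2)-1$, not two), and then distribute $\partial_x$ across the $\beta$-integrand while treating $\varpi_2(\beta)$ as a frozen function. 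But if you apply that $\|\partial_x f\|_{L^\infty}$-dependent bound uniformly, the terms coming from $\partial_x$ landing on $\sinh(f(x)+h_2)$ or on $\sinh(f(x)+h_2)\partial_x f(x)$ in the denominator become cubic in $\|\partial_x f\|_{L^\infty}$, while if you instead use the constant bound $\|\varpi_2\|_{L^\infty}\lesssim |\mathcal{A}|/(\cosh(h_2)-1)$ coming from the reduced identity, the terms from $\partial_x$ landing on $\sin(x-\beta)$ (or on $\sin(x-\beta)$ in the derivative of the denominator) carry no $\|\partial_x f\|_{L^\infty}$ at all and cannot be absorbed by the dissipation. The paper avoids this mismatch by working with the two-fold integral representation (after the change of variables $\beta\mapsto x-w$, $\gamma\mapsto x-w-z$) and the reduced form of $\varpi_2$; then the only $\|\partial_x f\|_{L^\infty}$ contribution from the $\varpi_2$-kernel arises when $\partial_x$ actually strikes $f(x-w-z)$ (the paper's $J_4$ term), while the remaining pieces carry the constant bound, which yields exactly the quadratic-plus-linear structure in \eqref{eq:hipnorminf}. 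Your argument would go through in the small-data regime since a cubic term is dominated there by a quadratic one, but it would not produce precisely the hypothesis \eqref{eq:hipnorminf}; to reproduce it you should either adopt the double-integral rewriting as the paper does, or use the reduced (constant) bound on $\varpi_2$ for the $(\partial_x f)^2$-bearing terms and the $\|\partial_x f\|_{L^\infty}$-bearing bound only for the $\sin(x-\beta)$-derived pieces.
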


Finally, we state our result ensuring the global existence and decay of solutions to the inhomogeneous Muskat problem \eqref{eq:I}.
\begin{theorem}\label{teo3}
Let $0< f_0\in W^{1,\infty}$ be the initial data for the inhomogeneous Muskat problem \eqref{eq:I} satisfying the hypotheses of Theorems \ref{teo1} and \ref{teo2}. Let us consider that the system is in the Rayleigh-Taylor stable case $\rho^--\rho^+>0$ and that $h_2>0$. There exists a constant $c$ such that if
$$
\|\partial_x f_0\|_{L^\infty}\leq c,
$$
there exist at least one global weak solution (in the sense of definition \eqref{eq:weak})
$$
f\in L^\infty(0,T;W^{1,\infty})\cap L^2(0,T;H^{3/2}) \quad \forall\, T>0.
$$
Furthermore, this solution satisfies 
$$
\|f(t)\|_{L^\infty}\leq \langle f(x,0)\rangle+(\max_x f(x,0)-\min_x f(x,0))e^{-\mu t}.
$$
$$
\|\partial_x f(t)\|_{L^\infty}\leq \|\partial_x f(0)\|e^{-\mu t},
$$
for a sufficiently small constant $\mu$ depending on the parameters of the problem and the initial data.
\end{theorem}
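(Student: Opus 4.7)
The plan is a standard compactness-via-regularization argument. I construct a family of smooth solutions by mollifying the initial datum, use the uniform $L^{\infty}$ and $\dot W^{1,\infty}$ bounds supplied by Theorems \ref{teo1} and \ref{teo2} to extend them to arbitrary time intervals, derive an additional $L^{2}_{t}H^{3/2}$ estimate to gain compactness, and finally pass to the limit in the weak formulation \eqref{eq:weak}.

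\emph{Regularization and global smooth approximants.} First I would mollify $f_{0}$ by a standard Friedrichs mollifier to obtain a sequence $f_{0}^{\varepsilon}\in H^{3}(\mathbb{T})$ with $f_{0}^{\varepsilon}\to f_{0}$ uniformly, $\min_{x} f_{0}^{\varepsilon}>0$, and $\|\partial_{x} f_{0}^{\varepsilon}\|_{L^{\infty}}\le\|\partial_{x} f_{0}\|_{L^{\infty}}$. Because the hypotheses \eqref{condition:smallness} and \eqref{eq:hipnorminf} are open and depend continuously on the $L^{\infty}$ and $\dot W^{1,\infty}$ quantities of the datum, they are inherited by $f_{0}^{\varepsilon}$ for $\varepsilon$ small. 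The classical local well-posedness theory in $H^{3}$ for the inhomogeneous problem (see the references in Section 1) then produces a smooth solution $f^{\varepsilon}\in C([0,T_{\varepsilon});H^{3})$. While it exists, Theorems \ref{teo1} and \ref{teo2} give the $\varepsilon$-independent bounds
$$
\|f^{\varepsilon}(t)\|_{L^{\infty}}\le \langle f_{0}\rangle+(\max f_{0}-\min f_{0})e^{-\mu t},\qquad \|\partial_{x}f^{\varepsilon}(t)\|_{L^{\infty}}\le\|\partial_{x} f_{0}\|_{L^{\infty}}e^{-\mu t}.
$$
These estimates keep $\Gamma^{\varepsilon}(t)$ uniformly separated from $\Gamma_{\mathrm{perm}}$ and preclude slope blow-up, so a standard continuation argument extends $f^{\varepsilon}$ globally in time.

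\emph{Compactness.} To gain time-space regularity I would use the partial parabolic nature of \eqref{eq:divergence}: its leading part is the classical Muskat dissipation of fractional order $1/2$, while the inhomogeneous contribution is of zero order in $f$, as is made explicit right after \eqref{eq:divergence}. A $\dot H^{1}$ energy estimate then yields an inequality of the form
$$
\frac{d}{dt}\|f^{\varepsilon}\|_{\dot H^{1}}^{2}+c\,\|f^{\varepsilon}\|_{\dot H^{3/2}}^{2}\le C\bigl(\|f^{\varepsilon}\|_{L^{\infty}},\|\partial_{x}f^{\varepsilon}\|_{L^{\infty}}\bigr)\|f^{\varepsilon}\|_{\dot H^{1}}^{2},
$$
whose right-hand side is controlled by the bounds above. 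Grönwall's inequality gives uniform bounds in $L^{\infty}_{t}\dot H^{1}\cap L^{2}_{t}\dot H^{3/2}$, and inserting them in \eqref{eq:I} provides a uniform bound on $\partial_{t}f^{\varepsilon}$ in some negative Sobolev space. An application of the Aubin--Lions lemma then produces a subsequence $f^{\varepsilon}\to f$ strongly in $L^{2}(0,T;C^{\alpha})$ for some $\alpha\in(0,1)$, weak-$*$ in $L^{\infty}(0,T;W^{1,\infty})$, and weakly in $L^{2}(0,T;H^{3/2})$, which accounts for the regularity asserted in the statement.

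\emph{Passage to the limit and decay.} The main obstacle is the identification of the limits of the two nonlocal terms in \eqref{eq:weak}. The uniform $W^{1,\infty}$ bound and the separation $\min_{x}f^{\varepsilon}+h_{2}>\delta>0$ make the kernels $\arctan(\tanh((f^{\varepsilon}(x)-f^{\varepsilon}(x-\beta))/2)/\tan(\beta/2))$ and $\log(\cosh(f^{\varepsilon}(x)+h_{2})-\cos(x-\beta))$ uniformly bounded and locally Lipschitz in $f^{\varepsilon}$, so the strong $C^{\alpha}$ convergence combined with dominated convergence handles the first term. The doubly nonlocal term involving $\varpi_{2}$ is the most delicate point, since it is linear in $\partial_{x}f^{\varepsilon}$ inside a singular integral; the key here is to invoke the alternative representation of $\varpi_{2}$ derived immediately after \eqref{eq:divergence}, which eliminates $\partial_{x}f$ from its defining integral and reduces the limit passage to a continuity question for a trilinear singular integral operator evaluated on a strongly convergent sequence. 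Finally, the pointwise decay inequalities for $f$ follow from weak-$*$ lower semicontinuity of the $L^{\infty}$ and $\dot W^{1,\infty}$ norms applied to the already known exponential decay of $f^{\varepsilon}$.
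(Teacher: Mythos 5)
Your overall architecture — mollify, propagate the uniform pointwise bounds from Theorems \ref{teo1} and \ref{teo2}, prove an $\dot H^1$ energy estimate that yields $L^2_t H^{3/2}$ compactness, then pass to the limit in \eqref{eq:weak} via Aubin--Lions — is the same as the paper's, and your Step on the $\dot H^1$ estimate and the limit passage are essentially correct (you even anticipate the useful rewriting of $\varpi_2$ without $\partial_x f$ for the trilinear term). However, there is a concrete gap in your construction of global smooth approximants.

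You only regularize the initial datum, then claim that the uniform $W^{1,\infty}$ bound plus ``a standard continuation argument'' extends the $H^3$ solution globally. This is not justified as stated. The local well-posedness of the inhomogeneous Muskat problem in $H^3$ (as in \cite{berselli2014local}) comes with a continuation criterion in terms of the $H^3$ norm (or a comparable high-order quantity), not the Lipschitz seminorm. A uniform $W^{1,\infty}$ bound does not automatically prevent $H^3$ blow-up: that would require a Beale--Kato--Majda–type result for this system, which is not established here and is by no means routine given the additional singular double integral coming from the permeability jump. Precisely to sidestep this, the paper regularizes the \emph{equation}, not just the datum, by adding artificial viscosity $\varepsilon\,\partial_x^2 f^\varepsilon$ (see \eqref{eq:Ireg}). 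With the viscous term, the $H^1$ and $H^3$ energy estimates close via Sobolev interpolation and Young's inequality with $\varepsilon$-dependent constants,
\begin{equation*}
\frac{1}{2}\frac{d}{dt}\|\partial_x^3 f^\varepsilon\|_{L^2}^2+\frac{\varepsilon}{2}\|\partial_x^4 f^\varepsilon\|_{L^2}^2\leq C_\varepsilon(\|f_0\|_{W^{1,\infty}})\|\partial_x^3 f^\varepsilon\|_{L^2}^2,
\end{equation*}
so the approximants exist globally for each fixed $\varepsilon>0$; the $\varepsilon$-uniform bounds used for compactness are then obtained separately in Steps 2 and 3, relying on the genuine $\dot H^{1/2}$ dissipation of the Muskat kernel and the smallness of $\|\partial_x f_0\|_{L^\infty}$. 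You should either adopt the same vanishing-viscosity device, or else supply an actual proof that the $H^3$ norm of the unregularized flow cannot blow up while $\|\partial_x f\|_{L^\infty}$ stays bounded; the latter is an additional piece of work, not a ``standard'' step.

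One further small point: after adding the viscosity you must also check that the pointwise maximum-principle computations of Theorems \ref{teo1} and \ref{teo2} survive the extra term. They do, because at a point of maximum (resp.\ minimum) $\varepsilon\,\partial_x^2 f^\varepsilon\leq 0$ (resp.\ $\geq 0$), so the dissipative sign is preserved; this is worth stating explicitly if you follow the paper's route.
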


\subsection{Notation}
We will use the following notation throughout the manuscript. We are working on the one dimensional torus $\TT=[-\pi,\pi]$, endowed with periodic boundary conditions. For  $1\leq p<\infty$ we denote by  $L^p(\TT;\RR)$ the standard Lebesgue space of measurable $p$-integrable $\RR$-valued functions and by  $L^\infty(\TT;\RR)$ the space of essentially bounded functions. Particularly, $L^2(\TT;\RR)$ is equipped with the inner product 
	$
	(f,g)_{L^2}=\int_{\TT}f\cdot\overline{g}\,{\rm d}x,
	$
	where $\overline{g}$ denotes the complex conjugate of $g$.  The Fourier
	coefficients and the Fourier series of $f(x)\in L^2(\TT;\RR)$ are defined by
	$\widehat{f}(\xi)=\int_{\TT}f(x){\rm e}^{-{\rm i}x \xi}\,{\rm d}x$ and 
	$f(x)=\frac{1}{2\pi}\sum_{k\in{\mathbb Z}}\widehat{f}(k){\rm e}^{{\rm i}x k}$, respectively. We define the $L^2$-based Sobolev space $H^s$ on $\TT$ with values in $\RR$ as
	\begin{align*}
		H^s(\TT;\RR):=\left\{f\in L^2(\TT;\RR):\|f\|_{H^s(\TT;\RR)}^2
		=\sum_{k\in{\mathbb Z}}|(1+|k|^2)^{s/2} f(k)|^2<+\infty\right\}.
	\end{align*}
In case $s\in\ZZ^{+}$, the Sobolev space $W^{s,p}(\TT;\RR)$ is defined as
$$ W^{s,p} (\TT)=\{f\in L^{p}(\TT), \p_{x}^{s}f\in L^{p}(\TT)\} $$
endowed with the norm 
$$\norm{f}^{p}_{W^{s,p}(\TT)}= \norm{f}^{p}_{L^p}+\norm{\p_{x}^{s}f}^{p}_{L^p}$$
where $\partial_{x}$ is the classical first order differential operator with respect $x$. Their homogenous counterparts will be denoted by  $\dot{W}^{s,p} (\TT,\RR)$ and $\dot{H}^{s}(\TT,\RR)$ when $p=2$. To simplify notation, we will just write
$$ L^p=L^p(\TT,\RR), \quad H^s=H^s(\TT,\RR), \quad W^{s,p}=W^{s,p} (\TT,\RR),\quad \dot{H}^s=\dot{H}^s(\TT,\RR), \quad \dot{W}^{s,p}=\dot{W}^{s,p} (\TT,\RR).$$
We denote with $C=C(\cdot)$ any positive universal constant that may depend on fixed parameters and controlled quantities. Note also that this constant may vary from line to line. It is also important to remind that in order to light notation the condition almost everywhere (a.e) is not always indicated.

\subsection{Plan of the paper}
We conclude the introduction by outlining the contents of this paper. In Section 2 we present the proof of Theorem \ref{teo1} showing the decay in the $L^\infty$ norm of the solution. In Section 3 we prove the decay in the $\dot{W}^{1,\infty}$ norm of the solution providing the proof of Theorem \ref{teo2}. Later, in Section 4 we prove the existence of global weak solutions. To that purpose, we first regularize the problem and show a priori estimates of the approximated solutions in $H^3$. Furthermore, the approximated solutions also satisfy the pointwise estimates obtained in Theorems \ref{teo1} and \ref{teo2}. Next, we show uniform $H^1$ estimates which
are of lower order but allow us to use the parabolic gain of regularity ensuring the needed compacity to pass to the limit and establishing Theorem \ref{teo3}.

\section{Decay in $L^\infty$}\label{decay:Linf}
In this section we establish the decay in the $L^\infty$ norm of the solution.

\begin{proof}[Proof of Theorem \ref{teo1}]
The proof follows from a pointwise method. This pointwise approach allow us to track the dynamics for the oscillation of the function $f$ and will ensure that $f$ decays towards the equilibrium and maintains its positivity.

Without losing generality we assume that
$$
\int_{\TT}f(x) \ dx =2\pi.
$$
Then, we can write
$$
f=1+g, 
$$
where $g$ has zero mean, i.e. 
$$
\int_{\TT}g(x) \ dx =0.
$$

Before showing the oscillation decay \eqref{osc:decay}, let us rewrite system \eqref{eq:I}-\eqref{eq:II}
in a much more convenient way. To that purpose, we recall the divergence form of the Muskat problem \eqref{eq:divergence}. Thus, expanding the derivative we have that 
\begin{align}
\p_{t}f(x)=&{}\frac{\kappa^+(\rho^--\rho^+)}{2\pi} \p_{x}f(x)\text{P.V.}\int_{\TT} \frac{\tan((x-\beta)/2)(1-\tanh^2((f(x)-f(\beta)/2))}{\tan^{2}((x-\beta)/2)+\tanh^{2}((f(x)-f(\beta)/2))} \ d\beta  \nonumber \\
     &{}\hspace{0.5cm}-\frac{\kappa^+(\rho^--\rho^+)}{2\pi} \text{P.V.}\int_{\TT} \frac{\tanh((f(x)-f(\beta))/2)(1+\tan^{2}((x-\beta)/2)}{\tan^{2}((x-\beta)/2)+\tanh^{2}((f(x)-f(\beta)/2))} \ d\beta\nonumber\\
     &+\frac{\mathcal{A}}{4\pi} \text{P.V.}\int_\TT\int_{\TT}\frac{(\p_{x} f(x)\sinh(f(x)+h_2)+\sin(w))}{\cosh(f(x)+h_2)-\cos(w)}
 \frac{\sin(z)}{\cosh(h_{2}+f(x-w-z))-\cos(z)}  dz \ dw \label{eq:simplified:1}
\end{align}
where we have used the change of variables $x-\beta=w$ and $\beta-\gamma=z$. Using the fact that
$$ \text{P.V.}\int_\TT \frac{\sin(z)}{\cosh(f(x-w)+h_{2})-\cos(z)} dz=0$$ 
we rewrite the last integral in \eqref{eq:simplified:1} as
\begin{align}
\frac{\mathcal{A}}{4\pi} \text{P.V.}&\int_\TT\int_{\TT}\frac{(\p_{x} f(x)\sinh(f(x)+h_2)+\sin(w))}{\cosh(f(x)+h_2)-\cos(w)}
 \frac{\sin(z)}{\cosh(h_{2}+f(x-w-z))-\cos(z)}  dz \ dw \nonumber\\
={}&\frac{\mathcal{A}}{4\pi} \text{P.V.}\int_\TT\int_{\TT}\left[ \frac{(\p_{x} f(x)\sinh(f(x)+h_2)+\sin(w))}{\cosh(f(x)+h_2)-\cos(w)}
\right]  \nonumber \\ 
&\hspace{1cm} \times \left[\frac{\sin(z)}{\cosh(h_{2}+f(x-w-z))-\cos(z)}-\frac{\sin(z)}{\cosh(f(x-w)+h_{2})-\cos(z)}\right] dz \ dw \nonumber \\ 
=&{}-\frac{\mathcal{A}}{4\pi} \text{P.V.}\int_\TT\int_{\TT}\left[ \frac{(\p_{x} f(x)\sinh(f(x)+h_2)+\sin(w))}{\cosh(f(x)+h_2)-\cos(w)}\right]   \nonumber \\
& \hspace{1cm} \times \frac{\sin(z)\left( \cosh(f(x-w-z)+h_{2})-\cosh(f(x-w)+h_{2})\right)}{\left[ \cosh(h_{2}+f(x-w-z))-\cos(z)\right]\left[\cosh(h_{2}+f(x-w))-\cos(z)\right]}  dz \ dw    \nonumber \\
={}&\frac{\mathcal{A}}{4\pi} \text{P.V.}\int_\TT\int_{\TT}\left[ \frac{(\p_{x} f(x)\sinh(f(x)+h_2)+\sin(w))}{\cosh(f(x)+h_2)-\cos(w)}
\right]  \nonumber \\
 &\hspace{0.2cm} \times \frac{\sin(z)(f(x-w-z)-f(x-w))}{\left[ \cosh(h_{2}+f(x-w-z))-\cos(z)\right]\left[\cosh(h_{2}+f(x-w))-\cos(z)\right]}\nonumber\\
  &\hspace{0.2cm} \times\int_{0}^{1}\sinh\left[\lambda (f(x-w-z)+h_{2})-(1-\lambda)(f(x-w)+h_2) \right]\ d\lambda \ dz \ dw. \label{eq:simplified:2}
\end{align}

Due to the smoothness of $f$, we have that $g\in C^1([0,T]\times\TT)$. In particular,
\begin{equation}\label{max:g}
M(t)=\displaystyle\max_{x}g(x,t)=g(\overline{x}_t,t)>0
\end{equation}
and 
\begin{equation}\label{min:g}
m(t)=\displaystyle\min_{x}g(x,t)=g(\underline{x}_t,t)<0
\end{equation}
are Lipschitz functions. Indeed, to see this it is enough to observe that $M(t)$ satisfies

\begin{align*}
\left|M(t)-M(s)\right|&=\left\{\begin{array}{cc}g(\overline{x}_t,t)-g(\overline{x}_s,s) \text{ if } M(t)>M(s)\\
g(\overline{x}_s,s)-g(\overline{x}_t,t) \text{ if }M(s)>M(t)\end{array}\right.
\\
&\leq \left\{\begin{array}{cc}g(\overline{x}_t,t)-g(\overline{x}_t,s) \text{ if } M(t)>M(s)\\
g(\overline{x}_s,s)-g(\overline{x}_s,t) \text{ if }M(s)>M(t)\end{array}\right.
\\
&\leq \left\{\begin{array}{cc}|\partial_t g(\overline{x}_t,z)||t-s| \text{ if } M(t)>M(s)\\
|\partial_t g(\overline{x}_s,z)||t-s| \text{ if }M(s)>M(t)\end{array}\right.
\\
&\leq \max_{y,z}|\partial_t g(y,z)||t-s|.
\end{align*}
For $m(t)$ we can compute similarly and conclude
$$
\left|m(t)-m(s)\right|\leq \max_{y,z}|\partial_t g(y,z)||t-s|.
$$
We invoke Rademacher's Theorem to obtain that $M(t)$ and $m(t)$ are differentiable (in $t$) almost everywhere. Then we have that
\begin{align*}
M'(t)&=\lim_{h\rightarrow0} \frac{g(\overline{x}_{t+h},t+h)-g(\overline{x}_t,t)}{h}\\
&=\lim_{h\rightarrow0} \frac{g(\overline{x}_{t+h},t+h)-g(\overline{x}_t,t)\pm g(\overline{x}_{t+h},t)}{h}\\
&\leq\lim_{h\rightarrow0} \frac{g(\overline{x}_{t+h},t+h)-g(\overline{x}_{t+h},t)}{h}\\
&\leq \partial_t g(\overline{x}_{t},t).
\end{align*}
Similarly,
\begin{align*}
M'(t)&=\lim_{h\rightarrow0} \frac{g(\overline{x}_{t+h},t+h)-g(\overline{x}_t,t)}{h}\\
&=\lim_{h\rightarrow0} \frac{g(\overline{x}_{t+h},t+h)-g(\overline{x}_t,t)\pm g(\overline{x}_{t},t+h)}{h}\\
&\geq\lim_{h\rightarrow0} \frac{g(\overline{x}_{t},t+h)-g(\overline{x}_{t},t)}{h}\\
&\geq \partial_t g(\overline{x}_{t},t).
\end{align*}
As a consequence
$$
M'(t)=\partial_t g(\overline{x}_{t},t) \text{ a.e.}
$$
Similarly, we can obtain that
$$
m'(t)=\partial_t g(\underline{x}_{t},t) \text{ a.e.}
$$

Therefore, taking $x=\overline{x}_t$ in equation \eqref{eq:simplified:1} and using that $\p_{x}g(\overline{x}_{t},t)=0$, we infer that 
\begin{align*}
M'(t)=&{}-\frac{\kappa^+(\rho^--\rho^+)}{2\pi} \text{P.V.}\int_{\TT} \frac{\tanh((g(\overline{x}_t)-g(\beta))/2)(1+\tan^{2}((\overline{x}_t-\beta)/2)}{\tan^{2}((\overline{x}_t-\beta)/2)+\tanh^{2}((g(\overline{x}_t)-g(\beta)/2))} \ d\beta\nonumber\\
     &+\frac{\mathcal{A}}{4\pi} \text{P.V.}\int_\TT\int_{\TT}\frac{\sin(w)}{\cosh(1+g(\overline{x}_t)+h_2)-\cos(w)}
 \frac{\sin(z)}{\cosh(h_{2}+1+g(\overline{x}_t-w-z))-\cos(z)}  dz \ dw
\end{align*}
Using the previous calculations in \eqref{eq:simplified:2}, we can further write
\begin{align*}
&M'(t)={} -\frac{\kappa^+(\rho^--\rho^+)}{2\pi} \text{P.V.}\int_{\TT} \frac{\tanh((M(t)-g(\beta))/2)(1+\tan^{2}((\overline{x}_t-\beta)/2)}{\tan^{2}((\overline{x}_t-\beta)/2)+\tanh^{2}((M(t)-g(\beta)/2))} \ d\beta \nonumber \\
&\hspace{1cm} + \frac{\mathcal{A}}{4\pi} \text{P.V.}\int_\TT\int_{\TT}\left[ \frac{\sin(w)}{\cosh(M(t)+1+h_2)-\cos(w)}
\right]  \nonumber \\
 & \times \frac{\sin(z)(g(\overline{x}_t-w-z)-g(\overline{x}_t-w))}{\left[ \cosh(h_{2}+g(\overline{x}_t-w-z)+1)-\cos(z)\right]\left[\cosh(h_{2}+g(\overline{x}_t-w)+1)-\cos(z)\right]}  dz \ dw \nonumber\\
  & \times \int_{0}^{1}\sinh\left[\lambda (g(\overline{x}_t-w-z)+1+h_{2})-(1-\lambda)(g(\overline{x}_t-w)+1+h_2) \right]\ d\lambda  \nonumber \\
 &:= K_{1}+K_{2}.
\end{align*}
Taking into account that
 \begin{align*}
 \tan^{2}((\overline{x}_t-\beta)/2)+\tanh^{2}((M(t)-g(\beta)/2)) &\leq \tan^{2}((\overline{x}_t-\beta)/2+1, \\
 \sinh((M(t)-g(\beta))/2) & \geq (M(t)-g(\beta))/2,
 \end{align*}
 we have that $K_1$ is bounded by
\begin{align*}
     K_1 &\leq -\frac{\kappa^+(\rho^--\rho^+)}{2\pi} \int_{\TT} \frac{(M(t)-g(\beta))/2)}{\cosh((M(t)-g(\beta)/2))} \ d\beta \\
     &\leq -\frac{\kappa^+(\rho^--\rho^+)}{2\pi} \frac{1}{{\cosh((M(t)-m(t)/2))}} \int_{\TT} (M(t)-g(\beta))/2) \ d\beta  \\
    &\leq -\frac{\kappa^+(\rho^--\rho^+)}{2}\frac{M(t)}{\cosh((M(t)-m(t)/2))}
 \end{align*}
due to the mean zero condition. To estimate the term $K_2$ it suffices to notice that 
$$
     \cosh(M(t)+1+h_{2})-\cos(z) \geq \cosh(1+h_2)-1
$$
and hence 
\begin{align*}
     K_2 &\leq \frac{|\mathcal{A}|}{2\pi} \frac{(M(t)-m(t))\sinh(1+M(t)+h_2)}{(\cosh(h_2+1)-1)^{3}} \int_\TT\int_{\TT} \abs{\sin(w)\sin(z)}\ dz dw  \\
     &\leq \frac{8|\mathcal{A}|}{\pi (\cosh(h_2+1)-1)^{3}} \left(M(t)-m(t)\right)\sinh(1+M(t)-m(t)+h_2),
 \end{align*}
where we have used that 
$$
-m(t)>0.
$$
Thus, collecting both estimates we have shown that
\begin{multline}\label{ODE:M(t)}
M'(t) \leq -\frac{\kappa^+(\rho^--\rho^+)}{2}\frac{M(t)}{\cosh((M(t)-m(t)/2))}\\
+\frac{8|\mathcal{A}|}{\pi (\cosh(h_2+1)-1)^{3}} \left(M(t)-m(t)\right)\sinh(1+M(t)-m(t)+h_2))\text{ a.e.}
\end{multline}
Similarly, we can compute
\begin{align*}
&-m'(t)={} -\frac{\kappa^+(\rho^--\rho^+)}{2\pi} \text{P.V.}\int_{\TT} \frac{\tanh((g(\beta)-m(t))/2)(1+\tan^{2}((\underline{x}_t-\beta)/2)}{\tan^{2}((\underline{x}_t-\beta)/2)+\tanh^{2}((m(t)-g(\beta)/2))} \ d\beta \nonumber \\
&\hspace{1cm} - \frac{\mathcal{A}}{4\pi} \text{P.V.}\int_\TT\int_{\TT}\left[ \frac{\sin(w)}{\cosh(m(t)+1+h_2)-\cos(w)}
\right]  \nonumber \\
 & \times \frac{\sin(z)(g(\underline{x}_t-w-z)-g(\underline{x}_t-w))}{\left[ \cosh(h_{2}+g(\underline{x}_t-w-z)+1)-\cos(z)\right]\left[\cosh(h_{2}+g(\underline{x}_t-w)+1)-\cos(z)\right]}  dz \ dw \nonumber\\
  & \times \int_{0}^{1}\sinh\left[\lambda (g(\underline{x}_t-w-z)+1+h_{2})-(1-\lambda)(g(\underline{x}_t-w)+1+h_2) \right]\ d\lambda  \nonumber \\
 &:= L_{1}+L_{2}.
\end{align*}
Reasoning as before, we find that
\begin{align*}
L_1 &\leq\frac{\kappa^+(\rho^--\rho^+)}{2}\frac{m(t)}{\cosh((M(t)-m(t)/2))}
 \end{align*}
\begin{align*}
     L_2 &\leq \frac{8|\mathcal{A}|}{\pi (\cosh(h_2+1)-1)^{3}} \left(M(t)-m(t)\right)\sinh(1+M(t)-m(t)+h_2).
 \end{align*}
Thus, we have found the following estimate for $m'(t)$ 
\begin{multline}\label{ODE:m(t)}
-m'(t) \leq \frac{\kappa^{+}(\rho^2-\rho^1)}{2}\frac{m(t)}{\cosh((M(t)-m(t)/2))}\\
+\frac{8\mathcal{|A|}}{\pi (\cosh(h_2+1)-1)^{3}} \left(M(t)-m(t)\right)\sinh(1+M(t)-m(t)+h_2)\text{ a.e.}
\end{multline}
Next, we study the evolution of the oscillation  of $g$
$$
O(t)= M(t)-m(t).
$$
We combine \eqref{ODE:M(t)}-\eqref{ODE:m(t)} to show that
\begin{multline}\label{ODE:O}
    O'(t) \leq -\frac{\kappa^+(\rho^--\rho^+)}{2} O(t)\bigg{(} \frac{1}{\cosh(O(t)/2)}\\
    -\frac{16\mathcal{|A|}}{\kappa^+(\rho^--\rho^+)\pi (\cosh(h_2+1)-1)^{3}} \ \sinh(1+O(t)+h_2) \bigg{)}\text{ a.e.}
\end{multline}
By the smallness assumption \eqref{condition:smallness}, we have that at time $t=0$, 
\begin{multline*}
    O'(t)\bigg{|}_{t=0} \leq -\frac{\kappa^+(\rho^--\rho^+)}{2} O(0)\bigg{(} \frac{1}{\cosh(O(0)/2)}\\
    -\frac{16\mathcal{|A|}}{\kappa^+(\rho^--\rho^+)\pi (\cosh(h_2+1)-1)^{3}} \ \sinh(1+O(0)+h_2) \bigg{)}
\end{multline*}
and hence there exists a sufficiently small $0<\delta$ such that 
\begin{equation}
 O'(t)<0 \text{ a.e.}\Longrightarrow O(t)<O(0), \quad \forall 0\leq t<\delta.
\end{equation} 
We observe that, if the smallness condition \eqref{condition:smallness} holds at time $t=0$, then the smallness condition \eqref{condition:smallness} propagates with the evolution. Then, we find that \begin{equation}
 O(t)<O(0), \quad \forall t\geq 0.
\end{equation} 
Furthermore, by equation \eqref{ODE:O} we deduce that 
\begin{equation}\label{ODE:O:II}
    O'(t) \leq -\mu O(t)
\end{equation}
where 
$$
\mu= \frac{\kappa^+(\rho^--\rho^+)}{2}\bigg{(} \frac{1}{\cosh(O(0)/2)}\\
    -\frac{16\mathcal{|A|}}{\kappa^+(\rho^--\rho^+)\pi (\cosh(h_2+1)-1)^{3}} \ \sinh(1+O(0)+h_2) \bigg{)}
$$
which after a straightforward integration yields
\begin{equation}\label{ODE:O:decay}
    O(t) \leq O(0)\mbox{exp}(-\mu t).
\end{equation}
Once we have proved that the oscillation of $g$ decays, we only have to observe that 
$$
\max_x f(x,t)-\min_x f(x,t)=O(t)
$$
and that
$$
\max_x f(x,t)=\max_x f(x,t)-\min_x f(x,t)+\min_x f(x,t)\leq O(t)+\langle f(x,t)\rangle.
$$
From this inequality we conclude the result by noticing that the mean of $f$ is preserved by the evolution due to the divergence form of the Muskat problem \eqref{eq:divergence}.

\end{proof}

\section{Decay in $\dot{W}^{1,\infty}$}

In this section we show the decay in the $\dot{W}^{1,\infty}$ norm of the solution. 
\begin{proof}[Proof of Theorem \ref{teo2}]
The proof follows from a careful pointwise estimate as in Theorem \ref{teo1}. We observe that, due to the hypoteses in the statement and Theorem \ref{teo1}, the solution $f(x,t)$ remains non-negative for all times. We start rewriting the equation in a more convenient form.  Changing variables in the first integral of the evolution equation 
 \eqref{eq:I}, we observe that 
\begin{multline}\label{eq:I:change}
\p_{t} f(x)=\frac{\kappa^+(\rho^--\rho^+)}{4\pi} \text{P.V.}\int_\TT\frac{\sin(x-\beta)(\p_{x} f(x)-\p_{x} f(\beta))}{\cosh(f(x)-f(\beta))-\cos(x-\beta)} d\beta \\
+\frac{1}{4\pi} \text{P.V.}\int_\TT\frac{(\p_{x} f(x)\sinh(f(x)+h_2)+\sin(x-\beta))\varpi_2(\beta)}{\cosh(f(x)+h_2)-\cos(x-\beta)} \ d\beta
\end{multline}
where the second vorticity amplitude $\varpi_2(\beta)$ can be written as
\begin{equation*}
\varpi_2(\beta)=\mathcal{A}\text{P.V.}\int_\TT\frac{\sinh(h_2+f(\gamma))\p_{x}f(\gamma)}{\cosh(h_2+f(\gamma))-\cos(\beta-\gamma)}d\gamma.
\end{equation*}
Taking a space derivative of \eqref{eq:I:change} and writing the last term in \eqref{eq:I:change} as in 
\eqref{eq:simplified:1} we infer
\begin{align*}
\p_{t} \p_{x}f(x)&= I_{1}+I_{2}
\end{align*}
where
\begin{align*}
I_{1}&= \frac{\kappa^+(\rho^--\rho^+)}{4\pi} \text{P.V.} \ \p_{x}\int_\TT \frac{\sin(x-\beta)(\p_{x} f(x)-\p_{x} f(\beta))}{\cosh(f(x)-f(x-\beta))-\cos(x-\beta)} d\beta, \\
I_{2}&= \frac{\mathcal{A}}{4\pi} \text{P.V.} \  \p_{x} \int_\TT\int_{\TT}\frac{(\p_{x} f(x)\sinh(f(x)+h_2)+\sin(w))}{\cosh(f(x)+h_2)-\cos(w)}
 \frac{\sin(z)}{\cosh(h_{2}+f(x-w-z))-\cos(z)}dz \ dw.
\end{align*}
Using Leibniz rule we obtain 
\begin{multline*}
I_{1}=\frac{\kappa^+(\rho^--\rho^+)}{4\pi} \text{P.V.}\int_\TT \p_{x}\left(\frac{\sin(x-\beta)(\p_{x} f(x)-\p_{x} f(\beta))}{\cosh(f(x)-f(\beta))-\cos(x-\beta)} \right) d\beta \\
+ \frac{\kappa^+(\rho^--\rho^+)}{4\pi} \bigg(\displaystyle\lim_{\epsilon\to 0} \bigg[ \frac{\sin(x-\beta)(\p_{x} f(x)-\p_{x} f(\beta))}{\cosh(f(x)-f(\beta))-\cos(x-\beta)} \bigg]^{x+\pi}_{x+\epsilon} \\
 + \displaystyle\lim_{\epsilon\to 0} \bigg[ \frac{\sin(x-\beta)(\p_{x} f(x)-\p_{x} f(\beta))}{\cosh(f(x)-f(\beta))-\cos(x-\beta)} \bigg]^{x-\epsilon}_{x+\pi} \bigg)=I_{11}+I_{12}.
\end{multline*}
Using the periodicity, we find that the boundary terms vanish
\[ I_{12}= -\frac{\kappa^+(\rho^--\rho^+)}{4\pi} \left(0-\frac{\p_{x}f(x)}{\p_{x}f(x)^{2}+1}\right)+\frac{\kappa^+(\rho^--\rho^+)}{\pi} 
\left(0-\frac{\p_{x}f(x)}{\p_{x}f(x)^{2}+1}\right) =0. \]
Expanding the derivative in $I_{11}$ 
\begin{align*}
I_{11}=& \frac{\kappa^+(\rho^--\rho^+)}{4\pi}\text{P.V.}\int_\TT \frac{\cos(x-\beta)(\p_{x}f(x)-\p_{x}f(\beta))}{\cosh(f(x)-f(\beta))-\cos(x-\beta)} \ d\beta \\
&- \frac{\kappa^+(\rho^--\rho^+)}{4\pi}\text{P.V.}\int_\TT \frac{\sin(x-\beta)(\p_{x}f(x)-\p_{x}f(\beta))\left(\sinh(f(x)-f(\beta))\p_{x}f(x)+\sin(x-\beta)\right)}{(\cosh(f(x)-f(\beta))-\cos(x-\beta))^2} \ d\beta \\
&+ \frac{\kappa^+(\rho^--\rho^+)}{4\pi}\text{P.V.}\int_\TT \frac{\p_{x}^{2}f(x) \sin(x-\beta)}{\cosh(f(x)-f(\beta))-\cos(x-\beta)} \ d\beta= I_{111}+I_{112}+I_{113}.
\end{align*}
We can split further the second term as
\begin{multline*}
I_{112}= - \frac{\kappa^+(\rho^--\rho^+)}{4\pi}\text{P.V.}\int_\TT \frac{\sin(x-\beta)(\p_{x}f(x)-\p_{x}f(\beta))\sinh(f(x)-f(\beta))\p_{x}f(x)}{(\cosh(f(x)-f(\beta))-\cos(x-\beta))^2} \ d\beta \\
-\frac{\kappa^+(\rho^--\rho^+)}{4\pi}\text{P.V.}\int_\TT \frac{\sin^{2}(x-\beta)(\p_{x}f(x)-\p_{x}f(\beta))}{(\cosh(f(x)-f(\beta))-\cos(x-\beta))^2} \ d\beta 
\end{multline*}
from where, changing variables,
\begin{align*}
I_{111}+I_{112}&=  \frac{\kappa^+(\rho^--\rho^+)}{4\pi}\text{P.V.}\int_\TT \frac{\p_{x}f(x)-\p_{x}f(x-\beta)}{(\cosh(f(x)-f(x-\beta))-\cos(\beta))^2}  \\
&\quad \quad \times \bigg[\cosh(f(x)-f(x-\beta))\cos(\beta)-1-\sin(\beta)\sinh(f(x)-f(x-\beta))\p_{x}f(x)\bigg] d\beta.
\end{align*}
Hence
\begin{align}
I_{1}&= \frac{\kappa^+(\rho^--\rho^+)}{4\pi}\text{P.V.}\int_\TT \frac{\p_{x}f(x)-\p_{x}f(x-\beta)}{(\cosh(f(x)-f(x-\beta))-\cos(\beta))^2} \nonumber \\
&\quad \quad \times \bigg[\cosh(f(x)-f(x-\beta))\cos(\beta)-1-\sin(\beta)\sinh(f(x)-f(x-\beta))\p_{x}f(x)\bigg]  d\beta \nonumber \\ 
 &\quad + \frac{\kappa^+(\rho^--\rho^+)}{4\pi}\text{P.V.}\int_\TT \frac{\p_{x}^{2}f(x) \sin(x-\beta)}{\cosh(f(x)-f(\beta))-\cos(x-\beta)} \ d\beta. \label{I11}
\end{align}
On the other hand, computing the derivative in $I_{2}$ we find that
\[
I_{2}=I_{21}+I_{22}+I_{23}
\]
with
\begin{align*}
I_{21}&=\frac{\mathcal{A}}{4\pi} \text{P.V.} \int_\TT \int_\TT \frac{\left(\p_{x}^{2} f(x)\sinh(f(x)+h_{2})+(\p_{x}f(x))^{2}\cosh(f(x)+h_2)\right)}{\cosh(f(x)+h_{2})-\cos(w)} \\
& \hspace{2cm} \times  \frac{\sin(z)}{\cosh(h_{2}+f(x-w-z))-\cos(z)} \  dz \ dw \\
I_{22}&= -\frac{\mathcal{A}}{4\pi} \text{P.V.} \int_\TT \int_\TT \frac{(\p_{x} f(x))^{2}\sinh^{2}(f(x)+h_2)+\sin(w) \sinh(f(x)+h_2)\p_{x}f(x)}{(\cosh(f(x)+h_{2})-\cos(w))^2} \\
& \hspace{2cm} \times  \frac{\sin(z)}{\cosh(h_{2}+f(x-w-z))-\cos(z)} \  dz \ dw \\
I_{23}&= -\frac{\mathcal{A}}{4\pi} \text{P.V.} \int_\TT \int_\TT \frac{(\p_{x}f(x)\sinh(f(x)+h_2)+\sin(w))(\sinh(h_{2}+f(x-w-z))\p_{x}f(x-w-z))}{\cosh(f(x)+h_2)-\cos(w)} \\
& \hspace{2cm} \times  \frac{\sin(z)}{(\cosh(h_{2}+f(x-w-z))-\cos(z))^{2}} \  dz \ dw
\end{align*}

Collecting the previous computations, we have shown that 
\begin{align}
\p_{t}\p_{x}f(x)&= \frac{\kappa^+(\rho^--\rho^+)}{4\pi}\text{P.V.}\int_\TT \frac{\p_{x}f(x)-\p_{x}f(x-\beta)}{(\cosh(f(x)-f(x-\beta))-\cos(\beta))^2} \nonumber \\
&\quad \quad \times \bigg[\cosh(f(x)-f(x-\beta))\cos(\beta)-1-\sin(\beta)\sinh(f(x)-f(x-\beta))\p_{x}f(x)\bigg]d\beta \nonumber \\ 
 &\quad + \frac{\kappa^+(\rho^--\rho^+)}{4\pi}\text{P.V.}\int_\TT \frac{\p_{x}^{2}f(x) \sin(x-\beta)}{\cosh(f(x)-f(\beta))-\cos(x-\beta)} \ d\beta  \nonumber \\
 &+ \frac{\mathcal{A}}{4\pi} \text{P.V.} \int_\TT \int_\TT \frac{\left(\p_{x}^{2} f(x)\sinh(f(x)+h_{2})+(\p_{x}f(x))^{2}\cosh(f(x)+h_2)\right)}{\cosh(f(x)+h_{2})-\cos(w)}  \nonumber \\
& \hspace{2cm} \times  \frac{\sin(z)}{\cosh(h_{2}+f(x-w-z))-\cos(z)} \  dz \ dw  \nonumber \\
& -\frac{\mathcal{A}}{4\pi} \text{P.V.} \int_\TT \int_\TT \frac{(\p_{x} f(x))^{2}\sinh^{2}(f(x)+h_2)+\sin(w) \sinh(f(x)+h_2)\p_{x}f(x)}{(\cosh(f(x)+h_{2})-\cos(w))^2} \nonumber  \\
& \hspace{2cm} \times  \frac{\sin(z)}{\cosh(h_{2}+f(x-w-z))-\cos(z)} \  dz \ dw \nonumber  \\
& -\frac{\mathcal{A}}{4\pi} \text{P.V.} \int_\TT \int_\TT \frac{(\p_{x}f(x)\sinh(f(x)+h_2)+\sin(w))(\sinh(h_{2}+f(x-w-z))\p_{x}f(x-w-z))}{\cosh(f(x)+h_2)-\cos(w)} \nonumber  \\
& \hspace{2cm} \times  \frac{\sin(z)}{(\cosh(h_{2}+f(x-w-z))-\cos(z))^{2}} \  dz \ dw. \label{evol:eq}
\end{align}
As in the proof of Theorem \ref{teo1}, we define
\begin{equation}\label{max:der:g}
M(t)=\displaystyle\max_{x} \p_{x}f(x,t)=\p_{x}f(\overline{x}_t,t)>0
\end{equation}
and 
\begin{equation}\label{min:der:g}
m(t)=\displaystyle\min_{x} \p_{x}f(x,t)=\p_{x}f(\underline{x}_t,t)<0.
\end{equation}
Due to the smoothness of $f$, we infer by the Sobolev embedding that $\p_{x}f\in C^1([0,T]\times\TT)$. As a consequence, following the arguments in Section \ref{decay:Linf}, we find that $M(t)$ and $m(t)$ are Lipschitz functions and then, due to Radamacher's Theorem, they are differentiable almost everywhere. Moreover, 
\[ M'(t)=\p_{t}\p_{x}f(\overline{x}_{t},t), \quad m'(t)=\p_{t}\p_{x}f(\underline{x}_{t},t)\quad \text{a.e.}.\]
Therefore, evaluating \eqref{evol:eq} at $x=\overline{x}_{t}$ and noticing that $\p_{x}^{2}f(\overline{x}_{t})=0$ we find that
\begin{equation}
M'(t)=J_{1}+J_{2}+J_{3}+J_{4}
\end{equation}
where
\begin{align*}
J_{1}&=\frac{\kappa^+(\rho^--\rho^+)}{4\pi}\text{P.V.}\int_\TT \frac{M(t)-\p_{x}f(\overline{x}_{t}-\beta)}{(\cosh(f(\overline{x}_{t})-f(\overline{x}_{t}-\beta))-\cos(\beta))^2}\nonumber  \\
&\quad \quad \times \bigg[\cosh(f(\overline{x}_{t})-f(\overline{x}_{t}-\beta))\cos(\beta)-1-\sin(\beta)\sinh(f(\overline{x}_{t})-f(\overline{x}_{t}-\beta))M(t)\bigg] \ d \beta, \\
J_{2}&= \frac{\mathcal{A}}{4\pi} \text{P.V.} \int_\TT \int_\TT \frac{M(t)^{2}\cosh(f(\overline{x}_{t})+h_2)}{\cosh(f(\overline{x}_{t})+h_{2})-\cos(w)}  \times  \frac{\sin(z)}{\cosh(h_{2}+f(\overline{x}_{t}-w-z))-\cos(z)} \  dz \ dw \nonumber \\
J_{3}&=-\frac{\mathcal{A}}{4\pi} \text{P.V.} \int_\TT \int_\TT \frac{M(t)^{2}\sinh^{2}(f(\overline{x}_{t})+h_2)+\sin(w) \sinh(f(\overline{x}_{t})+h_2)M(t)}{(\cosh(f(\overline{x}_{t})+h_{2})-\cos(w))^2} \nonumber  \\
& \hspace{2cm} \times  \frac{\sin(z)}{\cosh(h_{2}+f(\overline{x}_{t}-w-z))-\cos(z)} \  dz \ dw \\
J_{4}&= -\frac{\mathcal{A}}{4\pi} \text{P.V.} \int_\TT \int_\TT \frac{(M(t)\sinh(f(\overline{x}_{t})+h_2)+\sin(w))(\sinh(h_{2}+f(\overline{x}_{t}-w-z))\p_{x}f(\overline{x}_{t}-w-z))}{\cosh(f(\overline{x}_{t})+h_2)-\cos(w)} \nonumber  \\
& \hspace{2cm} \times  \frac{\sin(z)}{(\cosh(h_{2}+f(\overline{x}_{t}-w-z))-\cos(z))^{2}} \  dz \ dw
\end{align*}
%
%

Using that the solution $f$ remains non-neagative due to Theorem \ref{teo1}, we can obtain the following upper bounds for $J_{2}$ to $J_{4}$,
\begin{align}\label{J2est}
\abs{J_{2}}\leq \pi\abs{\mathcal{A}} \frac{\cosh(\norm{f}_{L^{\infty}}+h_{2})}{(\cosh(h_{2})-1)^2}M(t)^2  
\end{align}
\begin{align}\label{J3est}
\abs{J_{3}}\leq  \pi \abs{\mathcal{A}}\frac{\sinh^{2}(\norm{f}_{L^{\infty}}+h_2)}{(\cosh(h_{2})-1)^{3}}M(t)^{2}+\pi\abs{\mathcal{A}}\frac{\sinh(\norm{f}_{L^{\infty}}+h_{2})}{(\cosh(h_{2})-1)^3}M(t)
\end{align}
and
\begin{align}\label{J4est}
\abs{J_{4}}\leq   \pi \abs{\mathcal{A}}\frac{\sinh^{2}(\norm{f}_{L^{\infty}}+h_2)}{(\cosh(h_{2})-1)^{3}}M(t)\|\partial_x f\|_{L^\infty}+ \pi\abs{\mathcal{A}}\frac{\sinh(\norm{f}_{L^{\infty}}+h_{2})}{(\cosh(h_{2})-1)^3}\|\partial_x f\|_{L^\infty}.
\end{align}

Manipulating $J_{1}$ and using the trigonometric identities 
 \[2 \sin^{2}(x/2)= 1-\cos(x),\qquad 2\sinh^{2}(x/2)=\cosh(x)-1,\]
 we observe that
\begin{align*}
J_{1}&= \frac{\kappa^+(\rho^--\rho^+)}{4\pi}\text{P.V.}\int_\TT \frac{M(t)-\p_{x}f(\overline{x}_{t}-\beta)}{(\cosh(f(\overline{x}_{t})-f(\overline{x}_{t}-\beta))-\cos(\beta))^2}\nonumber \\
&\quad  \times \bigg[(\cos(\beta)-1) + \left(\cosh(f(\overline{x}_{t})-f(\overline{x}_{t}-\beta))-1\right)\cos(\beta) -\sin(\beta)\sinh(f(\overline{x}_{t})-f(\overline{x}_{t}-\beta))M(t)\bigg] \ d \beta \\
&= J_{11} + J_{12}
\end{align*}
with 
$$
J_{11}=-  \frac{\kappa^+(\rho^--\rho^+)}{8\pi}\text{P.V.}\int_\TT \frac{(M(t)-\p_{x}f(\overline{x}_{t}-\beta))}{\sin^{2}(\beta/2)} \frac{1}{\left(1+\frac{\sinh^{2}((f(\overline{x}_{t})-f(\overline{x}_{t}-\beta))/2)}{\sin^{2}(\beta/2)}\right)^2}d\beta ,
$$
and
\begin{align*}
J_{12}&=\frac{\kappa^+(\rho^--\rho^+)}{4\pi}\text{P.V.}\int_\TT \frac{M(t)-\p_{x}f(\overline{x}_{t}-\beta)}{(\cosh(f(\overline{x}_{t})-f(\overline{x}_{t}-\beta))-\cos(\beta))^2}\nonumber \\
&\quad  \times \bigg[ \left(\cosh(f(\overline{x}_{t})-f(\overline{x}_{t}-\beta))-1\right)\cos(\beta) -\sin(\beta)\sinh(f(\overline{x}_{t})-f(\overline{x}_{t}-\beta))M(t)\bigg]\ d \beta. \\
\end{align*}
In order to estimate $J_{11}$ we observe that the trigonometric analog of the incremental quotient can be estimated as
\begin{align}
\frac{\sinh((f(\overline{x}_{t})-f(\overline{x}_{t}-\beta))/2)}{\sin(\beta/2)}&=\frac{\sinh((f(\overline{x}_{t})-f(\overline{x}_{t}-\beta))/2)}{\beta/2}\frac{\beta/2}{\sin(\beta/2)} \nonumber \\
&\leq \frac{\sinh(\|\partial_x f\|_{L^\infty}(\beta/2))}{\beta/2}\frac{\pi}{2}  \nonumber \\
&\leq \sinh(\|\partial_x f\|_{L^\infty}(\pi/2)) \label{est:quo},
\end{align}
and then, for sufficiently small initial data, it can be assumed to be arbitrarily small. Furthermore,  using the geometric series 
$$
\frac{1}{1+r}=\sum_{n=0}^\infty r^n(-1)^n
$$
and its derivative
$$
\frac{1}{(1+r)^2}=\sum_{k=0}^\infty (1+k) r^{k}(-1)^k
$$
we find the expression
\begin{align}
 \frac{1}{\left(1+\frac{\sinh^{2}((f(\overline{x}_{t})-f(\overline{x}_{t}-\beta))/2)}{\sin^{2}(\beta/2)}\right)^2}&= 1+ \displaystyle\sum_{k=1}^{\infty} \left(\frac{\sinh((f(\overline{x}_{t})-f(\overline{x}_{t}-\beta))/2)}{\sin(\beta/2)}\right)^{2k}(-1)^{k}(1+k) \label{expre:sum}
\end{align}
where the convergence of the series is ensured (at least locally in time) if the Lipschitz seminorm of the initial data is small enough. Combining \eqref{est:quo} and \eqref{expre:sum} we have that
\begin{align}
J_{11}& \leq -  \frac{\kappa^+(\rho^--\rho^+)}{8\pi}\bigg{[}\text{P.V.}\int_\TT \frac{(M(t)-\p_{x}f(\overline{x}_{t}-\beta))}{\sin^{2}(\beta/2)} \ d \beta\nonumber\\ &\quad +\text{P.V.}\int_\TT\sum_{k=1}^{\infty} \frac{(M(t)-\p_{x}f(\overline{x}_{t}-\beta))}{\sin^{2}(\beta/2)}  \left(\frac{\sinh((f(\overline{x}_{t})-f(\overline{x}_{t}-\beta))/2)}{\sin(\beta/2)}\right)^{2k}(1+k) \ d \beta\bigg{]}\nonumber\\
& \leq -  \frac{\kappa^+(\rho^--\rho^+)}{8\pi}\bigg{[}\text{P.V.}\int_\TT \frac{(M(t)-\p_{x}f(\overline{x}_{t}-\beta))}{\sin^{2}(\beta/2)} \ d \beta\nonumber\\ &\quad +\left[\frac{1}{\left(1-\left(\sinh(\|\partial_x f\|_{L^\infty}(\pi/2))\right)^{2}\right)^2}-1\right]\text{P.V.}\int_\TT \frac{(M(t)-\p_{x}f(\overline{x}_{t}-\beta))}{\sin^{2}(\beta/2)}  \ d \beta\bigg{]} \label{J1est}
\end{align}
To bound the term $J_{12}$ we notice that
\begin{align*}
\cosh(f(\overline{x}_{t})-f(\overline{x}_{t}-\beta))-1&=\int_0^1 \partial_\lambda\cosh(\lambda(f(\overline{x}_{t})-f(\overline{x}_{t}-\beta)))d\lambda \leq |\beta| \|\partial_x f\|_{L^\infty} \sinh(\beta \|\partial_x f\|_{L^\infty}) \\
\sinh(f(\overline{x}_{t})-f(\overline{x}_{t}-\beta))&=\int_0^1 \partial_\lambda\sinh(\lambda(f(\overline{x}_{t})-f(\overline{x}_{t}-\beta)))d\lambda\leq |\beta|\|\partial_x f\|_{L^\infty} \cosh(\beta \|\partial_x f\|_{L^\infty}).
\end{align*}
and that
$$
\frac{\sin(x)}{\sin(x/2)}\leq 2,\quad \frac{\sinh(x \|\partial_x f\|_{L^\infty})}{\sin(x/2)}\leq \sinh(\pi \|\partial_x f\|_{L^\infty}),\quad \frac{x}{\sin(x/2)}\leq \pi.
$$
Therefore, we have that
\begin{align}
\abs{J_{12}} \leq& \frac{\kappa^+(\rho^--\rho^+)}{16\pi}\|\partial_x f\|_{L^\infty}(\cosh(\pi \|\partial_x f\|_{L^\infty}) 2\pi+\pi \sinh(\pi \|\partial_x f\|_{L^\infty}))\nonumber\\
&\times
\text{P.V.}\int_\TT \frac{M(t)-\p_{x}f(\overline{x}_{t}-\beta)}{\sin^{2}(\beta/2)} d\beta. \label{J12est}
\end{align}
As a consequence, collecting estimates \eqref{J2est},  \eqref{J3est}, \eqref{J4est}, \eqref{J1est} and \eqref{J12est} we have shown that 
\begin{align*}
M'(t) &\leq -  \frac{\kappa^+(\rho^--\rho^+)}{8\pi}\text{P.V.}\int_\TT \frac{(M(t)-\p_{x}f(\overline{x}_{t}-\beta))}{\sin^{2}(\beta/2)} \ d \beta \left(1-\mathcal{P}(\|\partial_x f\|_{L^\infty})\right)\\
&\quad+ \pi\abs{\mathcal{A}} \frac{\cosh(\norm{f}_{L^{\infty}}+h_{2})}{(\cosh(h_{2})-1)^2}M(t)^2 \\
&\quad + 2\pi \abs{\mathcal{A}}\frac{\sinh^{2}(\norm{f}_{L^{\infty}}+h_2)}{(\cosh(h_{2})-1)^{3}}M(t)\|\partial_x f\|_{L^\infty}+2\pi\abs{\mathcal{A}}\frac{\sinh(\norm{f}_{L^{\infty}}+h_{2})}{(\cosh(h_{2})-1)^3}\|\partial_x f\|_{L^\infty} \text{ a.e.},
\end{align*}
with
$$
\mathcal{P}=\left[\frac{1}{\left(1-\left(\sinh(\|\partial_x f\|_{L^\infty}(\pi/2))\right)^{2}\right)^2}-1\right]+\frac{1}{2}\|\partial_x f\|_{L^\infty}(\cosh(\pi \|\partial_x f\|_{L^\infty}) 2\pi+\pi \sinh(\pi \|\partial_x f\|_{L^\infty})).
$$
Using that
$$
\text{P.V.}\int_\TT \frac{(M(t)-\p_{x}f(\overline{x}_{t}-\beta))}{\sin^{2}(\beta/2)} \ d \beta\geq 2\pi M(t)
$$
we conclude that
\begin{align*}
M'(t) &\leq -  \frac{\kappa^+(\rho^--\rho^+)}{4}M(t) \left(1-\mathcal{P}(\|\partial_x f\|_{L^\infty})\right)+ \pi\abs{\mathcal{A}} \frac{\cosh(\norm{f}_{L^{\infty}}+h_{2})}{(\cosh(h_{2})-1)^2}M(t)^2 \\
&\quad + 2\pi \abs{\mathcal{A}}\frac{\sinh^{2}(\norm{f}_{L^{\infty}}+h_2)}{(\cosh(h_{2})-1)^{3}}M(t)\|\partial_x f\|_{L^\infty}+2\pi\abs{\mathcal{A}}\frac{\sinh(\norm{f}_{L^{\infty}}+h_{2})}{(\cosh(h_{2})-1)^3}\|\partial_x f\|_{L^\infty} \text{ a.e.},
\end{align*}
Similarly, when we evaluate \eqref{evol:eq} in $x=\underline{x}_{t}$ and use that $\p_{x}^{2}f(\underline{x}_{t})=0$, we find that
\begin{align*}
-m'(t) &\leq   \frac{\kappa^+(\rho^--\rho^+)}{8\pi}\text{P.V.}\int_\TT \frac{(m(t)-\p_{x}f(\overline{x}_{t}-\beta))}{\sin^{2}(\beta/2)} \ d \beta \left(1-\mathcal{P}(\|\partial_x f\|_{L^\infty})\right)\\
&\quad+ \pi\abs{\mathcal{A}} \frac{\cosh(\norm{f}_{L^{\infty}}+h_{2})}{(\cosh(h_{2})-1)^2}m(t)^2 \\
&\quad + 2\pi \abs{\mathcal{A}}\frac{\sinh^{2}(\norm{f}_{L^{\infty}}+h_2)}{(\cosh(h_{2})-1)^{3}}|m(t)|\norm{\partial_x f}_{L^{\infty}}+2\pi\abs{\mathcal{A}}\frac{\sinh(\norm{f}_{L^{\infty}}+h_{2})}{(\cosh(h_{2})-1)^3}\norm{\partial_x f}_{L^{\infty}} \text{ a.e.},
\end{align*}
As before, we estimate that
$$
-\text{P.V.}\int_\TT \frac{(m(t)-\p_{x}f(\overline{x}_{t}-\beta))}{\sin^{2}(\beta/2)} \ d \beta\geq -2\pi m(t),
$$
thus,
\begin{align*}
-m'(t) &\leq   \frac{\kappa^+(\rho^--\rho^+)}{4}m(t) \left(1-\mathcal{P}(\|\partial_x f\|_{L^\infty})\right)+ \pi\abs{\mathcal{A}} \frac{\cosh(\norm{f}_{L^{\infty}}+h_{2})}{(\cosh(h_{2})-1)^2}m(t)^2 \\
&\quad + 2\pi \abs{\mathcal{A}}\frac{\sinh^{2}(\norm{f}_{L^{\infty}}+h_2)}{(\cosh(h_{2})-1)^{3}}|m(t)|\norm{\partial_x f}_{L^{\infty}}+2\pi\abs{\mathcal{A}}\frac{\sinh(\norm{f}_{L^{\infty}}+h_{2})}{(\cosh(h_{2})-1)^3}\norm{\partial_x f}_{L^{\infty}} \text{ a.e.},
\end{align*}
We observe that when 
$$
\|\partial_x f\|_{L^\infty}=M(t),
$$
we find that
\begin{align}
\frac{d}{dt}\|\partial_x f\|_{L^\infty} &\leq -  \frac{\kappa^+(\rho^--\rho^+)}{4}\|\partial_x f\|_{L^\infty} \left(1-\mathcal{P}(\|\partial_x f\|_{L^\infty})\right)+ \pi\abs{\mathcal{A}} \frac{\cosh(\norm{f}_{L^{\infty}}+h_{2})}{(\cosh(h_{2})-1)^2}\|\partial_x f\|_{L^\infty}^2 \nonumber\\
&\quad + 2\pi \abs{\mathcal{A}}\frac{\sinh^{2}(\norm{f}_{L^{\infty}}+h_2)}{(\cosh(h_{2})-1)^{3}}\|\partial_x f\|_{L^\infty}^2+2\pi\abs{\mathcal{A}}\frac{\sinh(\norm{f}_{L^{\infty}}+h_{2})}{(\cosh(h_{2})-1)^3}\|\partial_x f\|_{L^\infty} \text{ a.e.}. \label{eq:norminf}
\end{align}
In the case where
$$
\|\partial_x f\|_{L^\infty}=-m(t),
$$
we also obtain the previous inequality \eqref{eq:norminf}. As a consequence, we conclude that \eqref{eq:norminf} holds almost everywhere in time and, if the initial data is small enough according to the statement of the Theorem (see equation \eqref{eq:hipnorminf}), we conclude the exponential decay towards the equilibrium.

\end{proof}

\section{Global existence of weak solutions}

In this section we prove the global existence of weak solutions for \eqref{eq:I}. The proof follows the same approach as the proof in \cite{gancedo2020global2}: 
\begin{enumerate}
\item We adopt a vanishing viscosity approach. This regularization procedure leads to globally defined solutions and is well-adapted to our pointwise estimates obtained in the previous sections. 
\item Then, we repeat the pointwise estimates in the previous sections to conclude that the approximate solutions decay in the Lipschitz norm.
\item We perform $L^\infty(0,T;H^1)$ estimates. These estimates give us that the approximate solutions are uniformly bounded in $L^2(0,T;H^{3/2})$. This parabolic gain of regularity ensures the compacity and allow us to pass to the limit.
\item This limit is a weak solution of \eqref{eq:I}.
\end{enumerate}

\begin{proof}[Proof of Theorem \ref{teo3}] 

\textbf{Step 1 (The regularization approach):} We consider the following regularized problem
\begin{multline}\label{eq:Ireg}
\p_{t} f^\varepsilon(x)=\frac{\kappa^+(\rho^--\rho^+)}{4\pi} \text{P.V.}\int_\TT\frac{\sin(\beta)(\p_{x} f^\varepsilon(x)-\p_{x} f^\varepsilon(x-\beta))}{\cosh(f^\varepsilon(x)-f^\varepsilon(x-\beta))-\cos(\beta)} d\beta\\
+\frac{1}{4\pi} \text{P.V.}\int_\TT\frac{(\p_{x} f^\varepsilon(x)\sinh(f^\varepsilon(x)+h_2)+\sin(x-\beta))\varpi_2(\beta)}{\cosh(f^\varepsilon(x)+h_2)-\cos(x-\beta)}d\beta+\varepsilon \partial_x^2 f^\varepsilon(x),
\end{multline}
with initial data
$$
f_0^\varepsilon(x)=\mathcal{J}_{\varepsilon}* f_0
$$
where $\mathcal{J}_{\varepsilon}$ denotes the periodic heat kernel at time $t=\varepsilon$. From this point onwards, abusing notation we drop the $\varepsilon$ notation from $f$. As before, equation \eqref{eq:Ireg} can be written in divergence form
\begin{multline}\label{eq:divergencereg}
\p_{t} f^\varepsilon(x)=\frac{\kappa^+(\rho^--\rho^+)}{\pi} \p_{x}\text{P.V.}\int_\TT \arctan\left( \frac{\tanh((f^\varepsilon(x)-f^\varepsilon(x-\beta))/2)}{\tan(\beta/2)}\right) \ d\beta\\
+\frac{1}{4\pi} \p_{x}\text{P.V.}\int_\TT\log(\cosh(f^\varepsilon(x)+h_2)-\cos(x-\beta))\varpi_2(\beta)d\beta+\varepsilon \partial_x^2 f^\varepsilon(x),
\end{multline}

The local well-posedness of \eqref{eq:Ireg} in $C([0,T_\varepsilon],H^3)$ is obtained as in \cite[\S 3.1]{berselli2014local}. Furthermore, this sequence of approximate solutions exists globally. Indeed, multiplying the previous equation by $f$ and integrating by parts, we find that
\begin{multline*}
\frac{1}{2}\frac{d}{dt}\|f\|_{L^2}^2+\varepsilon\|\partial_x f\|_{L^2}^2\\
=-\frac{\kappa^+(\rho^--\rho^+)}{\pi} \int_\TT \p_{x}f\text{P.V.}\int_\TT \arctan\left( \frac{\tanh((f(x)-f(x-\beta))/2)}{\tan(\beta/2)}\right) \ d\beta \ dx\\
-\frac{1}{4\pi} \int_\TT \p_{x}f\text{P.V.}\int_\TT\log(\cosh(f(x)+h_2)-\cos(x-\beta))\varpi_2(\beta)d\beta \ dx\\
\leq \frac{\varepsilon}{2}\|\partial_x f\|_{L^2}^2 + C_\varepsilon(\|f_0\|_{W^{1,\infty}}).
\end{multline*}
If we now multiply \eqref{eq:Ireg} by $-\partial_x^2 f$ and integrate in space we find
\begin{multline*}
\frac{1}{2}\frac{d}{dt}\|\partial_x f\|_{L^2}^2+\varepsilon\|\partial_x^2 f\|_{L^2}^2\\
=-\frac{\kappa^+(\rho^--\rho^+)}{\pi} \int_\TT \p_{x}^2f\partial_x\text{P.V.}\int_\TT \arctan\left( \frac{\tanh((f(x)-f(x-\beta))/2)}{\tan(\beta/2)}\right) \ d\beta \ dx\\
-\frac{1}{4\pi} \int_\TT \p_{x}^2f \partial_x \text{P.V.}\int_\TT\log(\cosh(f(x)+h_2)-\cos(x-\beta))\varpi_2(\beta)d\beta \  dx.
\end{multline*}
After taking $\varepsilon>0$ small enough, we can ensure that $f_0^\varepsilon$ also satisfies the hypotheses in Theorems \ref{teo1} and \ref{teo2}. Then, using that
\begin{align*}
\partial_x\text{P.V.}\int_\TT \arctan\left( \frac{\tanh((f(x)-f(x-\beta))/2)}{\tan(\beta/2)}\right) \ d\beta&=\text{P.V.}\int_\TT\frac{\sin(\beta)(\p_{x} f(x)-\p_{x} f(x-\beta))}{\cosh(f(x)-f(x-\beta))-\cos(\beta)}d\beta\\
&=\text{P.V.}\int_\TT\frac{\sin(\beta)}{\sin(\beta/2)}\frac{(\p_{x} f(x)-\p_{x} f(x-\beta))}{\sin(\beta/2)}\\
&\quad\times\frac{1}{2\left(1+\frac{\sinh^2((f(x)-f(x-\beta))/2)}{\sin^2(\beta/2)}\right)}d\beta
\end{align*}
together with Jensen inequality, we obtain
$$
\|\partial_x\text{P.V.}\int_\TT \arctan\left( \frac{\tanh((f(x)-f(x-\beta))/2)}{\tan(\beta/2)}\right) \ d\beta\|_{L^2}\leq C(\|f_0\|_{W^{1,\infty}})\|f\|_{\dot{H}^{3/2}},
$$
so that, using Sobolev interpolation
$$
\|f\|_{\dot{H}^{3/2}}\leq \|\p_{x}f\|^{1/2}_{L^{2}}\|\p^{2}_{x}f\|^{1/2}_{L^{2}}
$$ 
and Young's inequality
$$
a^{3/2}b^{1/2}\leq \varepsilon a^2+C_\varepsilon b^2,
$$
we conclude
$$
\frac{1}{2}\frac{d}{dt}\|\partial_x f\|_{L^2}^2+\varepsilon\|\partial_x^2 f\|_{L^2}^2\leq \frac{\varepsilon}{2}\|\partial_x^2 f\|_{L^2}^2 + C_\varepsilon(\|f_0\|_{W^{1,\infty}})\|\partial_x f\|_{L^2}^2.
$$
Using the same ideas (see also \cite{gancedo2017survey,granero2014global}), we conclude that
$$
\frac{1}{2}\frac{d}{dt}\|\partial_x^3 f\|_{L^2}^2+\varepsilon\|\partial_x^4 f\|_{L^2}^2\leq \frac{\varepsilon}{2}\|\partial_x^4 f\|_{L^2}^2 + C_\varepsilon(\|f_0\|_{W^{1,\infty}})\|\partial_x^3 f\|_{L^2}^2.
$$

\textbf{Step 2 (Uniform estimates in $W^{1,\infty}$):} The approximate solutions constructed above satisfy the pointwise estimates obtained in the previous sections. In particular, we have the global bounds
$$
\|f^\varepsilon\|_{L^\infty}\leq \|f_0\|_{L^\infty}
$$
and
$$
\|\partial_x f^\varepsilon\|_{L^\infty}\leq \|\partial_x f_0\|_{L^\infty}.
$$

\textbf{Step 3 (Uniform estimates in $H^{1}$):} We recall \eqref{evol:eq}. Then we find that
$$
\frac{1}{2}\frac{d}{dt}\|\partial_x f\|_{L^2}^2+\varepsilon\|\partial_x^2 f\|_{L^2}^2=I_1+I_2+I_3+I_4+I_5
$$
with
\begin{align*}
I_1&= \frac{\kappa^+(\rho^--\rho^+)}{4\pi}\text{P.V.}\int_\TT\int_\TT\partial_x f(x) \frac{\p_{x}f(x)-\p_{x}f(x-\beta)}{(\cosh(f(x)-f(x-\beta))-\cos(\beta))^2} \nonumber \\
&\quad \quad \times \bigg[\cosh(f(x)-f(x-\beta))\cos(\beta)-1-\sin(\beta)\sinh(f(x)-f(x-\beta))\p_{x}f(x)\bigg]d\beta \ dx \nonumber \\ 
I_2&=\frac{\kappa^+(\rho^--\rho^+)}{4\pi}\text{P.V.}\int_\TT\int_\TT\partial_x f(x) \frac{\p_{x}^{2}f(x) \sin(x-\beta)}{\cosh(f(x)-f(\beta))-\cos(x-\beta)} \ d\beta \ dx  \nonumber \\
I_3&= \frac{\mathcal{A}}{4\pi} \text{P.V.} \int_\TT \int_\TT\int_\TT\partial_x f(x) \frac{\left(\p_{x}^{2} f(x)\sinh(f(x)+h_{2})+(\p_{x}f(x))^{2}\cosh(f(x)+h_2)\right)}{\cosh(f(x)+h_{2})-\cos(w)}  \nonumber \\
& \hspace{2cm} \times  \frac{\sin(z)}{\cosh(h_{2}+f(x-w-z))-\cos(z)} \  dz \ dw \ dx  \nonumber \\
I_4&= -\frac{\mathcal{A}}{4\pi} \text{P.V.} \int_\TT \int_\TT \int_\TT\partial_x f(x) \frac{(\p_{x} f(x))^{2}\sinh^{2}(f(x)+h_2)+\sin(w) \sinh(f(x)+h_2)\p_{x}f(x)}{(\cosh(f(x)+h_{2})-\cos(w))^2} \nonumber  \\
& \hspace{2cm} \times  \frac{\sin(z)}{\cosh(h_{2}+f(x-w-z))-\cos(z)} \  dz \ dw \ dx\nonumber  \\
I_5&= -\frac{\mathcal{A}}{4\pi} \text{P.V.} \int_\TT \int_\TT \int_\TT\partial_x f(x) \frac{(\p_{x}f(x)\sinh(f(x)+h_2)+\sin(w))(\sinh(h_{2}+f(x-w-z))\p_{x}f(x-w-z))}{\cosh(f(x)+h_2)-\cos(w)} \nonumber  \\
& \hspace{2cm} \times  \frac{\sin(z)}{(\cosh(h_{2}+f(x-w-z))-\cos(z))^{2}} \  dz \ dw \ dx. 
\end{align*}
We write $I_1$ as follows
$$
I_1=I_{11}+I_{12}
$$
with
$$
I_{11}=-\frac{\kappa^+(\rho^--\rho^+)}{8\pi}\int_\TT\text{P.V.}\int_\TT\partial_x f(x) \frac{(\partial_x f(x)-\p_{x}f(x-\beta))}{\sin^{2}(\beta/2)} \frac{1}{\left(1+\frac{\sinh^{2}((f(x)-f(x-\beta))/2)}{\sin^{2}(\beta/2)}\right)^2}d\beta \ dx,
$$
and
\begin{align*}
I_{12}&=\frac{\kappa^+(\rho^--\rho^+)}{4\pi}\int_\TT\text{P.V.}\int_\TT \partial_x f(x)\frac{\partial_x f(x)-\p_{x}f(x-\beta)}{(\cosh(f(x)-f(x-\beta))-\cos(\beta))^2}\nonumber \\
&\quad  \times \bigg[ \left(\cosh(f(x)-f(x-\beta))-1\right)\cos(\beta) -\sin(\beta)\sinh(f(x)-f(x-\beta))\partial_x f(x)\bigg]\ d \beta \ dx.
\end{align*}
Changing variables we find that
\begin{align*}
I_{11}&=-\frac{\kappa^+(\rho^--\rho^+)}{8\pi}\int_\TT\text{P.V.}\int_\TT\partial_x f(x) \frac{(\partial_x f(x)-\p_{x}f(\beta))}{\sin^{2}((x-\beta)/2)} \frac{1}{\left(1+\frac{\sinh^{2}((f(x)-f(\beta))/2)}{\sin^{2}((x-\beta)/2)}\right)^2}d\beta \ dx\\
&=\frac{\kappa^+(\rho^--\rho^+)}{8\pi}\int_\TT\text{P.V.}\int_\TT\partial_x f(\beta) \frac{(\partial_x f(x)-\p_{x}f(\beta))}{\sin^{2}((x-\beta)/2)} \frac{1}{\left(1+\frac{\sinh^{2}((f(x)-f(\beta))/2)}{\sin^{2}((x-\beta)/2)}\right)^2}d\beta \ dx\\
&=-\frac{\kappa^+(\rho^--\rho^+)}{16\pi}\int_\TT\text{P.V.}\int_\TT \frac{(\partial_x f(x)-\p_{x}f(\beta))^2}{\sin^{2}((x-\beta)/2)} \frac{1}{\left(1+\frac{\sinh^{2}((f(x)-f(\beta))/2)}{\sin^{2}((x-\beta)/2)}\right)^2}d\beta \ dx.
\end{align*}
Now we observe that, using the estimates in Theorem \ref{teo2},
$$
I_{11}\leq -\delta(\|f_0\|_{W^{1,\infty}})\|\partial_x f\|_{H^{1/2}}^2,
$$
where, for uniformly bounded Lipschitz functions,
$$
0<c\leq\delta(\|f_0\|_{W^{1,\infty}}).
$$
Similarly, we have that
\begin{align*}
I_{12}&=\frac{\kappa^+(\rho^--\rho^+)}{4\pi}\int_\TT\text{P.V.}\int_\TT \partial_x f(x)\frac{\partial_x f(x)-\p_{x}f(\beta)}{(\cosh(f(x)-f(\beta))-\cos(x-\beta))^2}\nonumber \\
&\quad  \times \bigg[ \left(\cosh(f(x)-f(\beta))-1\right)\cos(x-\beta) -\sin(x-\beta)\sinh(f(x)-f(\beta))\partial_x f(x)\bigg]\ d \beta \ dx\\
&=-\frac{\kappa^+(\rho^--\rho^+)}{4\pi}\int_\TT\text{P.V.}\int_\TT \partial_x f(\beta)\frac{\partial_x f(x)-\p_{x}f(\beta)}{(\cosh(f(x)-f(\beta))-\cos(x-\beta))^2}\nonumber \\
&\quad  \times \bigg[ \left(\cosh(f(x)-f(\beta))-1\right)\cos(x-\beta) -\sin(x-\beta)\sinh(f(x)-f(\beta))\partial_x f(\beta)\bigg]\ d \beta \ dx\\
&=\frac{\kappa^+(\rho^--\rho^+)}{8\pi}\int_\TT\text{P.V.}\int_\TT \frac{(\partial_x f(x)-\p_{x}f(\beta))^2}{(\cosh(f(x)-f(\beta))-\cos(x-\beta))^2}\nonumber \\
&\quad  \times \bigg[ \left(\cosh(f(x)-f(\beta))-1\right)\cos(x-\beta)\bigg{]}\\
&\quad -\frac{\kappa^+(\rho^--\rho^+)}{8\pi}\int_\TT\text{P.V.}\int_\TT (\partial_x f(x)+\partial_x f(\beta))\frac{(\partial_x f(x)-\p_{x}f(\beta))^2}{(\cosh(f(x)-f(\beta))-\cos(x-\beta))^2}\\
&\quad\times\bigg{[}\sin(x-\beta)\sinh(f(x)-f(\beta))\bigg]\ d \beta \ dx\\
&\leq C(\|f_0\|_{W^{1,\infty}})\|\partial_x f_0\|_{L^\infty}\|\partial_x f\|^2_{H^{1/2}}.
\end{align*}
Changing variables and integrating by parts, we estimate
\begin{align*}
I_2&= -\frac{\kappa^+(\rho^--\rho^+)}{4\pi}\text{P.V.}\int_\TT\int_\TT\frac{(\partial_x f(x))^2}{2} \partial_x\left(\frac{\sin(\beta)}{\cosh(f(x)-f(x-\beta))-\cos(\beta)}\right) \ d\beta \ dx\\
&= \frac{\kappa^+(\rho^--\rho^+)}{4\pi}\text{P.V.}\int_\TT\int_\TT\frac{(\partial_x f(x))^2}{2} \frac{\sin(\beta)\sinh(f(x)-f(x-\beta))(\partial_x f(x)-\partial_x f(x-\beta))}{(\cosh(f(x)-f(x-\beta))-\cos(\beta))^2} \ d\beta \ dx\\
&= \frac{\kappa^+(\rho^--\rho^+)}{4\pi}\text{P.V.}\int_\TT\int_\TT\frac{(\partial_x f(x))^2}{2} \frac{\sin(x-\beta)\sinh(f(x)-f(\beta))(\partial_x f(x)-\partial_x f(\beta))}{(\cosh(f(x)-f(\beta))-\cos(x-\beta))^2} \ d\beta \ dx\\
&= -\frac{\kappa^+(\rho^--\rho^+)}{4\pi}\text{P.V.}\int_\TT\int_\TT\frac{(\partial_x f(\beta))^2}{2} \frac{\sin(x-\beta)\sinh(f(x)-f(\beta))(\partial_x f(x)-\partial_x f(\beta))}{(\cosh(f(x)-f(\beta))-\cos(x-\beta))^2} \ d\beta \ dx\\
&=\frac{\kappa^+(\rho^--\rho^+)}{8\pi}\text{P.V.}\int_\TT\int_\TT\frac{(\partial_x f(x)+\partial_x f(\beta))}{2} \frac{\sin(x-\beta)\sinh(f(x)-f(\beta))(\partial_x f(x)-\partial_x f(\beta))^2}{(\cosh(f(x)-f(\beta))-\cos(x-\beta))^2} \ d\beta \ dx\\
&\leq C(\|f_0\|_{W^{1,\infty}})\|\partial_x f_0\|_{L^\infty}\|\partial_x f\|^2_{H^{1/2}}.
\end{align*}
Integrating by parts, we have that 
\begin{align*}
I_3&= -\frac{\mathcal{A}}{4\pi} \text{P.V.} \int_\TT \int_\TT\int_\TT(\partial_x f(x))^2 \p_{x}\bigg{(}\frac{\sinh(f(x)+h_{2})}{\cosh(f(x)+h_{2})-\cos(w)}  \nonumber \\
& \hspace{2cm} \times  \frac{\sin(z)}{\cosh(h_{2}+f(x-w-z))-\cos(z)}\bigg{)} \  dz \ dw \ dx  +C(\|f_0\|_{W^{1,\infty}})\|\partial_x f\|_{L^2}^2\\
&\leq C(\|f_0\|_{W^{1,\infty}})\|\partial_x f\|_{L^2}^2.
\end{align*}
Similarly,
$$
I_4+I_5\leq C(\|f_0\|_{W^{1,\infty}})\|\partial_x f\|_{L^2}^2.
$$
As a consequence, we find that
$$
\frac{1}{2}\frac{d}{dt}\|\partial_x f\|_{L^2}^2+\delta(\|f_0\|_{W^{1,\infty}})\|\partial_x f\|_{H^{1/2}}^2\leq C(\|f_0\|_{W^{1,\infty}})\|\partial_x f_0\|_{L^\infty}\|\partial_x f\|^2_{H^{1/2}}+C(\|f_0\|_{W^{1,\infty}})\|\partial_x f\|_{L^2}^2.
$$
And as a consequence, due to the smallness of the initial data, we conclude
$$
\frac{d}{dt}\|\partial_x f\|_{L^2}^2+\sigma\|\partial_x f\|_{H^{1/2}}^2\leq C(\|f_0\|_{W^{1,\infty}})\|\partial_x f\|_{L^2}^2,
$$
for a sufficiently small $\sigma$. Using Gronwall tinequality, the previous inequality ensures the following $\varepsilon$-uniform bounds
$$
\|\partial_x f^\varepsilon\|_{L^2}\leq \|\partial_x f_0\|_{L^2}e^{Ct}.
$$
and
$$
\int_0^T\|\partial_x f^\varepsilon(s)\|^2_{H^{1/2}}ds\leq \frac{\|\partial_x f_0\|_{L^2}}{\sigma}e^{Ct}.
$$

\textbf{Step 4 (Passing to the limit):} We have the following uniform bounds
$$
\|f^\varepsilon\|_{L^\infty}\leq \|f_0\|_{L^\infty},
$$
$$
\|\partial_x f^\varepsilon\|_{L^\infty}\leq \|\partial_x f_0\|_{L^\infty},
$$
$$
\|\partial_x f^\varepsilon\|_{L^2}\leq \|\partial_x f_0\|_{L^2}e^{Ct},
$$
$$
\int_0^T\|\partial_x f^\varepsilon(s)\|^2_{H^{1/2}}ds\leq \frac{\|\partial_x f_0\|_{L^2}}{\sigma}e^{Ct}.
$$
Due to Banach-Alaoglu Theorem, we obtain that there exists a subsequence (denoted again by $f^\varepsilon$) that satisfies the following 
$$
f^\varepsilon \stackrel{\ast}{\rightharpoonup} f \in L^\infty(0,T;L^\infty),
$$
$$
\partial_xf^\varepsilon \stackrel{\ast}{\rightharpoonup} \partial_x f \in L^\infty(0,T;L^\infty),
$$
$$
\partial_xf^\varepsilon \rightharpoonup \partial_x f \in L^2(0,T;H^{1/2}),
$$
where 
$$
f\in L^\infty(0,T;W^{1,\infty})\cap L^2(0,T;H^{3/2}). 
$$
Using duality, we find that 
$$
\|\partial_tf^\varepsilon\|_{H^{-1}}=\sup_{\psi\in H^1}\langle \partial_t f^\varepsilon,\psi \rangle\leq C(\|f_0\|_{W^{1,\infty}}).
$$
Due to a classical Aubin-Lions type argument (see Corollary 4 of \cite{simon1986compact}), we also conclude the following strong convergence
$$
\partial_xf^\varepsilon \rightarrow \partial_x f \in L^2(0,T;H^{1/2-\epsilon}), \forall \epsilon>0.
$$
Equipped with these convergences we can conclude that the limit function $f$ is indeed a weak solution to the inhomogeneous Muskat problem \eqref{eq:I} in the sense of definition \eqref{eq:weak} (see \cite{gancedo2020global2} for more details).
\end{proof}

\section*{Acknowledgments}
D.A-O is supported by the Alexander von Humboldt Foundation. R.G-B is supported by the project ``Mathematical Analysis of Fluids and Applications" with reference PID2019-109348GA-I00/AEI/ 10.13039/501100011033 and acronym ``MAFyA" funded by Agencia Estatal de Investigaci\'on and the Ministerio de Ciencia, Innovacion y Universidades (MICIU). Project supported by a 2021 Leonardo Grant for Researchers and Cultural Creators, BBVA Fundation. The BBVA Foundation accepts no responsability for the opinions, statements and contents included in the project and/or the results thereof, which are entirely the responsability of the authors.

\bibliographystyle{abbrv}

\begin{thebibliography}{10}

\bibitem{alazard2020paralinearization}
T.~Alazard and O.~Lazar.
\newblock Paralinearization of the muskat equation and application to the
  cauchy problem.
\newblock {\em Archive for Rational Mechanics and Analysis}, 237(2):545--583,
  2020.

\bibitem{alazard2020endpoint}
T.~Alazard and Q.-H. Nguyen.
\newblock Endpoint Sobolev theory for the Muskat equation.
\newblock {\em arXiv preprint arXiv:2010.06915}, 2020.

\bibitem{alazard2021cauchy2}
T.~Alazard and Q.-H. Nguyen.
\newblock On the Cauchy problem for the Muskat equation. ii: Critical initial
  data.
\newblock {\em Annals of PDE}, 7(1):1--25, 2021.

\bibitem{alazard2021cauchy}
T.~Alazard and Q.-H. Nguyen.
\newblock On the Cauchy problem for the Muskat equation with non-Lipschitz
  initial data.
\newblock {\em Communications in Partial Differential Equations}, pages 1--42,
  2021.

\bibitem{alazard2021quasilinearization}
T.~Alazard and Q.-H. Nguyen.
\newblock Quasilinearization of the 3d Muskat equation, and applications to the
  critical Cauchy problem.
\newblock {\em arXiv preprint arXiv:2103.02474}, 2021.

\bibitem{ambrose2004well}
D.~M. Ambrose.
\newblock Well-posedness of two-phase Hele--Shaw flow without surface tension.
\newblock {\em European Journal of Applied Mathematics}, 15(5):597--607, 2004.

\bibitem{ambrose2014zero}
D.~M. Ambrose.
\newblock The zero surface tension limit of two-dimensional interfacial Darcy
  flow.
\newblock {\em Journal of Mathematical Fluid Mechanics}, 16(1):105--143, 2014.

\bibitem{berselli2014local}
L.~C. Berselli, D.~C{\'o}rdoba, and R.~Granero-Belinch{\'o}n.
\newblock Local solvability and turning for the inhomogeneous Muskat problem.
\newblock {\em Interfaces and Free Boundaries}, 16(2):175--213, 2014.

\bibitem{cameron2018global}
S.~Cameron.
\newblock Global well-posedness for the two-dimensional Muskat problem with
  slope less than 1.
\newblock {\em Analysis \& PDE}, 12(4):997--1022, 2018.

\bibitem{cameron2020eventual}
S.~Cameron.
\newblock Eventual regularization for the 3d Muskat problem: Lipschitz for
  finite time implies global existence.
\newblock {\em arXiv preprint arXiv:2007.03099}, 2020.

\bibitem{cameron2020global}
S.~Cameron.
\newblock Global wellposedness for the 3d Muskat problem with medium size
  slope.
\newblock {\em arXiv preprint arXiv:2002.00508}, 2020.

\bibitem{castro2013breakdown}
{\'A}.~Castro, D.~C{\'o}rdoba, C.~Fefferman, and F.~Gancedo.
\newblock Breakdown of smoothness for the Muskat problem.
\newblock {\em Archive for Rational Mechanics and Analysis}, 208(3):805--909,
  2013.

\bibitem{castro2012rayleigh}
{\'A}.~Castro, D.~C{\'o}rdoba, C.~Fefferman, F.~Gancedo, and
  M.~L{\'o}pez-Fern{\'a}ndez.
\newblock Rayleigh-taylor breakdown for the Muskat problem with applications to
  water waves.
\newblock {\em Annals of mathematics}, pages 909--948, 2012.

\bibitem{castro2016splash}
{\'A}.~Castro~Mart{\'\i}nez, D.~C{\'o}rdoba~Gazolaz, C.~L. Fefferman, and
  F.~Gancedo~Garc{\'\i}a.
\newblock Splash singularities for the one-phase Muskat problem in stable
  regimes.
\newblock {\em Archive for Rational Mechanics and Analysis, 222 (1), 213-243.},
  2016.

\bibitem{chen2021muskat}
K.~Chen, Q.-H. Nguyen, and Y.~Xu.
\newblock The Muskat problem with $C^1$ data.
\newblock {\em arXiv preprint arXiv:2103.09732}, 2021.

\bibitem{cheng2016well}
C.~A. Cheng, R.~Granero-Belinch{\'o}n, and S.~Shkoller.
\newblock Well-posedness of the Muskat problem with $H^2$ initial data.
\newblock {\em Advances in Mathematics}, 286:32--104, 2016.

\bibitem{constantin2016muskat}
P.~Constantin, D.~C{\'o}rdoba, F.~Gancedo, L.~Rodr{\'\i}guez-Piazza, and R.~M.
  Strain.
\newblock On the Muskat problem: global in time results in 2d and 3d.
\newblock {\em American Journal of Mathematics}, 138(6):1455--1494, 2016.

\bibitem{constantin2012global}
P.~Constantin, D.~C{\'o}rdoba, F.~Gancedo, and R.~M. Strain.
\newblock On the global existence for the Muskat problem.
\newblock {\em Journal of the European Mathematical Society}, 15(1):201--227,
  2012.

\bibitem{constantin2017global}
P.~Constantin, F.~Gancedo, R.~Shvydkoy, and V.~Vicol.
\newblock Global regularity for 2d Muskat equations with finite slope.
\newblock {\em Ann. Inst. H. Poincare Anal. Non Lineaire} 34(4):1041--1074, 2017.

\bibitem{cordoba2011interface}
A.~C{\'o}rdoba, D.~C{\'o}rdoba, and F.~Gancedo.
\newblock Interface evolution: the Hele-Shaw and Muskat problems.
\newblock {\em Annals of mathematics}, pages 477--542, 2011.

\bibitem{cordoba2013porous}
A.~C{\'o}rdoba, D.~C{\'o}rdoba, and F.~Gancedo.
\newblock Porous media: the Muskat problem in three dimensions.
\newblock {\em Analysis \& PDE}, 6(2):447--497, 2013.

\bibitem{cordoba2007contour}
D.~C{\'o}rdoba and F.~Gancedo.
\newblock Contour dynamics of incompressible 3-d fluids in a porous medium with
  different densities.
\newblock {\em Communications in Mathematical Physics}, 273(2):445--471, 2007.

\bibitem{cordoba2009maximum}
D.~C{\'o}rdoba and F.~Gancedo.
\newblock A maximum principle for the Muskat problem for fluids with different
  densities.
\newblock {\em Communications in Mathematical Physics}, 286(2):681--696, 2009.

\bibitem{cordoba2015note}
D.~C{\'o}rdoba, J.~G{\'o}mez-Serrano, and A.~Zlato{\v{s}}.
\newblock A note on stability shifting for the Muskat problem.
\newblock {\em Philosophical Transactions of the Royal Society A: Mathematical,
  Physical and Engineering Sciences}, 373(2050):20140278, 2015.

\bibitem{cordoba2017note}
D.~C{\'o}rdoba, J.~G{\'o}mez-Serrano, and A.~Zlato{\v{s}}.
\newblock A note on stability shifting for the Muskat problem, ii: From stable
  to unstable and back to stable.
\newblock {\em Analysis \& PDE}, 10(2):367--378, 2017.

\bibitem{cordoba2018global}
D.~Cordoba and O.~Lazar.
\newblock Global well-posedness for the 2d stable Muskat problem in $ h^{3/2}
  $.
\newblock {\em arXiv preprint arXiv:1803.07528}, 2018.

\bibitem{cordoba2014confined}
D.~C{\'o}rdoba~Gazolaz, R.~Granero-Belinch{\'o}n, and R.~Orive-Illera.
\newblock The confined Muskat problem: Differences with the deep water regime.
\newblock {\em Communications in Mathematical Sciences}, 12(3):423--455, 2014.

\bibitem{escher2011generalized}
J.~Escher, A.-V. Matioc, and B.-V. Matioc.
\newblock A generalized Rayleigh--Taylor condition for the Muskat problem.
\newblock {\em Nonlinearity}, 25(1):73, 2011.

\bibitem{escher2011parabolicity}
J.~Escher and B.-V. Matioc.
\newblock On the parabolicity of the Muskat problem: Well-posedness, fingering,
  and stability results.
\newblock {\em Zeitschrift f{\"u}r Analysis und ihre Anwendungen},
  30(2):193--218, 2011.

\bibitem{escher2015domain}
Joachim Escher, Bogdan-Vasile Matioc, and Christoph Walker.
\newblock The domain of parabolicity for the {M}uskat problem.
\newblock {\em Indiana Univ. Math. J.}, 67(2):679--737, 2018.

\bibitem{gancedo2017survey}
F.~Gancedo.
\newblock A survey for the Muskat problem and a new estimate.
\newblock {\em SeMA Journal}, 74(1):21--35, 2017.

\bibitem{gancedo2019muskat}
F.~Gancedo, E.~Garcia-Juarez, N.~Patel, and R.~M. Strain.
\newblock On the Muskat problem with viscosity jump: global in time results.
\newblock {\em Advances in Mathematics}, 345:552--597, 2019.

\bibitem{gancedo2020global2}
F.~Gancedo, R.~Granero-Belinch{\'o}n, and S.~Scrobogna.
\newblock Global existence in the Lipschitz class for the N-Peskin problem.
\newblock {\em To appear in Indiana University Mathematics Journal,  arXiv preprint arXiv:2011.02294}, 2020.

\bibitem{gancedo2020surface}
F.~Gancedo, R.~Granero-Belinch{\'o}n, and S.~Scrobogna.
\newblock Surface tension stabilization of the Rayleigh-Taylor instability for
  a fluid layer in a porous medium.
\newblock 37(6):1299--1343, 2020.

\bibitem{gancedo2020global}
F.~Gancedo and O.~Lazar.
\newblock Global well-posedness for the 3d Muskat problem in the critical
  Sobolev space.
\newblock {\em arXiv preprint arXiv:2006.01787}, 2020.

\bibitem{gomez2014turning}
J.~G{\'o}mez-Serrano and R.~Granero-Belinch{\'o}n.
\newblock On turning waves for the inhomogeneous Muskat problem: a
  computer-assisted proof.
\newblock {\em Nonlinearity}, 27(6):1471, 2014.

\bibitem{granero2014global}
R.~Granero-Belinch{\'o}n.
\newblock Global existence for the confined Muskat problem.
\newblock {\em SIAM Journal on Mathematical Analysis}, 46(2):1651--1680, 2014.

\bibitem{granero2020growth}
R.~Granero-Belinch{\'o}n and O.~Lazar.
\newblock Growth in the Muskat problem.
\newblock {\em Mathematical Modelling of Natural Phenomena}, 15:7, 2020.

\bibitem{granero2019well}
R.~Granero-Belinch{\'o}n and S.~Shkoller.
\newblock Well-posedness and decay to equilibrium for the Muskat problem with
  discontinuous permeability.
\newblock {\em Transactions of the American Mathematical Society},
  372(4):2255--2286, 2019.

\bibitem{matioc2019well}
A.-V. Matioc and B.-V. Matioc.
\newblock Well-posedness and stability results for a quasilinear periodic
  Muskat problem.
\newblock {\em Journal of Differential Equations}, 266(9):5500--5531, 2019.

\bibitem{matioc2018viscous}
B.-V. Matioc.
\newblock Viscous displacement in porous media: the Muskat problem in 2d.
\newblock {\em Transactions of the American Mathematical Society},
  370(10):7511--7556, 2018.

\bibitem{nguyen2020paradifferential}
H.~Q. Nguyen and B.~Pausader.
\newblock A paradifferential approach for well-posedness of the Muskat problem.
\newblock {\em Archive for Rational Mechanics and Analysis}, 237(1):35--100,
  2020.

\bibitem{patel2021global}
N.~Patel and N.~Shankar.
\newblock Global results for the inhomogeneous Muskat problem.
\newblock {\em arXiv preprint arXiv:2102.07754}, 2021.

\bibitem{patel2017large}
N.~Patel and R.~M. Strain.
\newblock Large time decay estimates for the Muskat equation.
\newblock {\em Communications in Partial Differential Equations},
  42(6):977--999, 2017.

\bibitem{pernas2017local}
T.~Pernas-Casta{\~n}o.
\newblock Local-existence for the inhomogeneous Muskat problem.
\newblock {\em Nonlinearity}, 30(5):2063, 2017.

\bibitem{pruess2016muskat}
Jan Pruess and Gieri Simonett.
\newblock On the Muskat flow.
\newblock {\em Evolution Equations and Control Theory}, 5:631--645, 2016.

\bibitem{siegel2004global}
M.~Siegel, R.~E. Caflisch, and S.~Howison.
\newblock Global existence, singular solutions, and ill-posedness for the
  Muskat problem.
\newblock {\em Communications on Pure and Applied Mathematics}, 57(10):1374--1411, 2004.

\bibitem{simon1986compact}
J.~Simon.
\newblock Compact sets in the space $L^p (0, t; B)$.
\newblock {\em Annali di Matematica pura ed applicata}, 146(1):65--96, 1986.

\end{thebibliography}

\end{document}